\documentclass[11pt]{article}
\usepackage{graphicx,wrapfig,lipsum}
\usepackage{amsmath,amssymb,graphicx} 
\usepackage{amsfonts}
\usepackage[margin=0.9in]{geometry} 
\usepackage{enumerate}
\usepackage[toc]{appendix}
\usepackage{hyperref}
\usepackage{fullpage}
\usepackage{tikz}
\usepackage{tcolorbox}
\usepackage{bbm}
\usepackage{comment}
\newtheorem{definition}{Definition}[section]
\newtheorem{theorem}{Theorem}
\newtheorem{proposition}{Proposition}
\newtheorem{lemma}{Lemma}
\newtheorem{remark}{Remark}
\usepackage{comment}
\usepackage{enumitem}
\usepackage[utf8]{inputenc}
\usepackage{xcolor}
\usepackage{graphicx}
\graphicspath{ {./images/} }
\allowdisplaybreaks

\newenvironment{proof}{\paragraph{Proof:}}{\hfill$\square$}
\allowdisplaybreaks
\def\es{\varepsilon}
\begin{document}
	
	
	\title{Upscaling of a reaction-diffusion-convection problem with exploding non-linear drift}

\author{Vishnu Raveendran $^{a,*}$,  Emilio N.M. Cirillo$^b$, Adrian Muntean$^a$ \\
$^a$ Department of Mathematics and Computer Science, Karlstad University, Sweden\\
$^b$ Dipartimento di Scienze di Base e Applicate per l’Ingegneria, \\ Sapienza Universit{\`a} di Roma, Italy\\
* vishnu.raveendran@kau.se}

	\date{\today} 
	\maketitle
	
	\begin{abstract}\label{abstract}
	We study a reaction-diffusion-convection problem with nonlinear drift posed in a domain with periodically arranged obstacles. The non-linearity in the drift is linked to the hydrodynamic limit of a totally asymmetric simple exclusion process (TASEP) governing a population of interacting particles crossing a domain with obstacle. Because of the imposed large drift scaling, this nonlinearity is expected to explode in the limit of a vanishing scaling parameter. As main working techniques,  we employ two-scale formal homogenization asymptotics with drift to derive  the corresponding upscaled model equations as well as the  structure of the effective transport tensors. Finally, we use Schauder's fixed point theorem as well as monotonicity arguments to study the weak solvability of the upscaled model posed in an unbounded domain.  This study wants to contribute with theoretical understanding needed when designing thin composite materials that are resistant to high velocity impacts.
	\end{abstract}
		{\bf Keywords}: Two-scale periodic homogenization asymptotics with drift; Reaction-diffusion equations with non-linear drift; Effective dispersion tensors for reactive flow in porous media; Weak solvability of quasi-linear systems in unbounded domains.  
		\\
		{\bf MSC2020}: 35B27; 35Q92; 35A01  
\maketitle

\section{Introduction}\label{In}
\begin{figure}[t!]
\includegraphics[width=10cm]{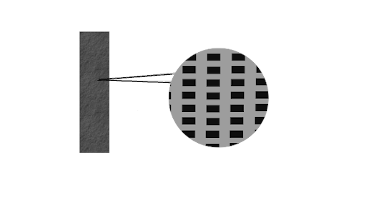}
\centering
\caption{Macroscopic view with a microscopic zoom-in of our composite layer.}
\label{fig1}
\end{figure}

\par Reaction-diffusion equations with large drift posed for  composite (porous) materials have many potential applications for real-world scenarios, like  high velocity fluid flow through composite materials, filtration combustion \cite{ijioma2019fast}, reactive flow through filters with wall integrated catalysts \cite{ILIEV2020103779}.

\par In this paper, we derive  and then analyze mathematically an upscaled equation associated to a microscopic reaction-diffusion equation with oscillating coefficients and exploding non-linear drift. The non-linearity  in the drift term is derived in an earlier work  of ours \cite{CIRILLO2016436} as hydrodynamic limit of a totally asymmetric simple exclusion process (TASEP)  for a population of interacting particles crossing a domain with obstacle. We consider the domain of definition for our problem $\Omega_\es$ as being paved by  periodically distributed  replicas of an $\es$-scaled standard cell Z  in $\mathbb{R}^n$. The standard cell $Z$ is a unit square (see Fig. \ref{scell}) with a solid rectangular obstacle placed in the centre of mass of $Z$, while $\es>0$ is a small scaling parameter linked to the multiscale structure of the material geometry (to be defined in Section \ref{MicM}). As the original discussion in \cite{CIRILLO2016436} was developed for a plannar geometry, without loss of generality, we assume  $n=2$. What concerns the target microscopic problem,  we consider that the drift is very large compare to diffusion and reaction. Consequently, the  drift will be scaled like a term of  order of $\frac{1}{\es}$, while all the other contributing effects will be of order of $\es^0$. The boundaries of the internal obstacles are assumed to have the following structure and conditions: on some part, say $\Gamma_D^\es$, we consider non-homogeneous Dirichlet boundary while on the rest of the boundary, say $\Gamma_N^\es$,  we consider  non-homogeneous Neumann boundary condition. The Hausdorff measure of $\Gamma_D^\es$ can vanish, while the Hausdorff measure of $\Gamma_N^\es$ is taken to be non-vanishing.  We consider the Dirichlet boundary term to be of order of $\es^\gamma$ ($\gamma>0$), while the Neumann boundary terms are assumed to be of order of $\es$.

\par This paper is continuation of the works \cite{raveendran21} and \cite{cirillo2020upscaling}, where we discuss similar settings posed in a thin composite layer with slow or moderate drift. The main purpose of this paper is to perform the homogenization asymptotics and analyze mathematically the upscaled equation corresponding  to the microscopic problem with the nonlinear drift exploding as $\es\rightarrow 0$. 
To cope with the presence of the large drift combined with the periodicity of the domain, we apply the method of two-scale asymptotic homogenization with drift (cf. \cite{allaire2010homogenization})   in a moving co-ordinate frame as suggested in \cite{pyatnitskiui1984averaging}, which is tailor-made for these specific asymptotic settings. The upscaled equation is then derived as a quasi-linear reaction-dispersion equation coupled with a quasi-linear elliptic cell problem. Notably, the resulting dispersion tensor compensates for both microscopic diffusion and drift mechanisms. As the upscaled equation is a quasi-linear parabolic problem posed in an unbounded domain also coupled strongly with a quasi-linear elliptic problem, ensuring its weak solvability is a challenging task. To do so, we are combining a number of technical ingredients including Schauder's fixed-point theorem (see Theorem 3 in section 9.2.2 of \cite{evans2010partial}), Kirchhoff's transformation  (see \cite{bagnall2013application}), and a monotonicity argument (see \cite{HILHORST20071118}), all these applied to an auxiliary problem that has the same structure as the upscaled equation just that is posed in a bounded smooth domain. Schauder's fixed-point argument takes care of the existence of weak solutions for the bounded domain formulation, while the Kirchhoff's transformation recasts the problem so that we can prove a positivity result as well as a comparison principle.
Extending the bounded domain solution to whole $\mathbb{R}^2$, we obtain a sequence of monotonically convergent solutions corresponding to fixed diameters of the bounded domains. We conclude that this sequence converges to the solution of the target upscaled problem posed in unbounded domain.

\par For the basic theory of homogenization, we refer the reader for instance to the classical textbook \cite{donato1999homogenization}. The method of formal two-scale asymptotic expansions with drift for the linear exploding drift case is introduced in \cite{ALLAIRE20102292}, see also \cite{allaire_orive_2007}, \cite{hutridurga}, \cite{hutridur},  and \cite{allaire2016} for related situations where the concept of two-scale convergence with drift (cf. \cite{marusik2005}) is used. A first justification for $u_\es\rightarrow u_0$ as $\es \rightarrow 0$ for asymptotic expansion with drift is given in \cite{allaire2010homogenization}, while corrector estimates and related mutiscale numerical simulations for linear reaction-diffusion problem with large drift were performed in \cite{ouaki2012multiscale} and \cite{ouaki2015priori}. Different numerical approximation strategies of the same problem were proposed in \cite{henning2010heterogeneous} relying on the concept of  heterogeneous multi-scale method (the HMM method). Treating the  same asymptotic questions for bounded domains is more troublesome and we avoid it here. We only mention in passing the Ref. \cite{ALLAIRE2012300}, where the linear case is completely solved. Many aspects are yet unexplored in the bounded domain case. A promising direction which combines homogenization with dimension reduction is dealt with in \cite{irina2011}. 
The approach is possibly applicable in our case as well. From a totally different perspective, it would be interesting to study how this type of two-scale asymptotics with drift can cope with an eventual stochasticity either in the geometry of the material (e.g. in the distribution and/or choice of shapes of the obstacles) or in the dynamics of the problem; see \cite{piatnitski2020homogenization}
    and  \cite{IYER2014957} for remotely related works.




We organize our paper as follows: In Section \ref{MicM}, we introduce our microscopic geometry along with the microscopic model we have in mind. 
In Section \ref{assumption}, we describe the assumptions that we rely on in the mathematical analysis of  our upscaled problem.  We apply the method of two-scale asymptotic expansions with drift to our microscopic problem in the bulk of Section \ref{upscalemodel}. Here we  derive as well the structure of the upscaled equations and of the effective transport (dispersion) tensor. 
Section \ref{existence} contains our discussion on  the structural properties of the dispersion tensor and the mathematical analysis exploring the solvability of the upscaled problem. 
We close the work with a list of conclusions and a short outlook for further related research; see  Section \ref{conclusion} for details.

\par 
\section{Microscopic model}\label{MicM}
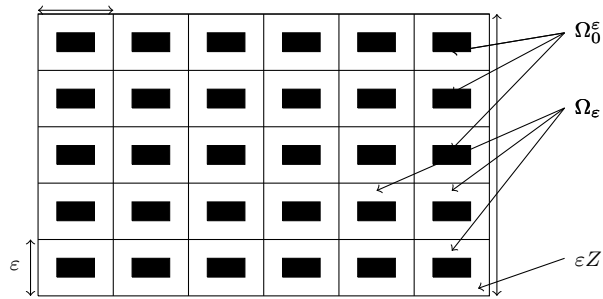
\begin{figure}[b]
		\begin{center}
			\begin{tikzpicture}
			\draw (0,0)  to (6,0) to (6,3.75) to (0,3.75) to (0,0);
			\draw (1,0) to (1,3.75)  ;
			\draw (2,0) to (2,3.75)  ;
			\draw (3,0) to (3,3.75)  ;
			\draw (4,0) to (4,3.75)  ;
			\draw (5,0) to (5,3.75)  ;
			\draw (0,.75) to (6,.75);
			\draw(0,1.5) to (6,1.5);
			\draw(0,2.25) to (6,2.25);
			\draw(0,3) to (6,3);
			\draw(0,3.75) to (6,3.75);
			\draw [fill](0.25,0.25) to (0.75,0.25) to(.75,.50) to (0.25,.50) to (.25,.25);
			\draw [fill](1.25,0.25) to (1.75,0.25) to(1.75,.50) to (1.25,.50) to (1.25,.25);
			\draw [fill](2.25,0.25) to (2.75,0.25) to(2.75,.50) to (2.25,.50) to (2.25,.25);
			\draw [fill](3.25,0.25) to (3.75,0.25) to(3.75,.50) to (3.25,.50) to (3.25,.25);
			\draw [fill](4.25,0.25) to (4.75,0.25) to(4.75,.50) to (4.25,.50) to (4.25,.25);
			\draw [fill](5.25,0.25) to (5.75,0.25) to(5.75,.50) to (5.25,.50) to (5.25,.25);
			
			\draw [fill](0.25,1.0) to (0.75,1.0) to(.75,1.25) to (0.25,1.25) to (.25,1.0);
			\draw [fill](1.25,1.) to (1.75,1.) to(1.75,1.25) to (1.25,1.25) to (1.25,1.0);
			\draw [fill](2.25,1.) to (2.75,1.) to(2.75,1.25) to (2.25,1.25) to (2.25,1.);
			\draw [fill](3.25,1.) to (3.75,1.) to(3.75,1.25) to (3.25,1.25) to (3.25,1.);
			\draw [fill](4.25,1.) to (4.75,1.) to(4.75,1.25) to (4.25,1.25) to (4.25,1.);
			\draw [fill](5.25,1.) to (5.75,1.) to(5.75,1.25) to (5.25,1.25) to (5.25,1.);
			
			\draw [fill](0.25,1.75) to (0.75,1.75) to(.75,2) to (0.25,2) to (.25,1.75);
			\draw [fill](1.25,1.75) to (1.75,1.75) to(1.75,2) to (1.25,2) to (1.25,1.75);
			\draw [fill](2.25,1.75) to (2.75,1.75) to(2.75,2) to (2.25,2) to (2.25,1.75);
			\draw [fill](3.25,1.75) to (3.75,1.75) to(3.75,2) to (3.25,2) to (3.25,1.75);
			\draw [fill](4.25,1.75) to (4.75,1.75) to(4.75,2) to (4.25,2) to (4.25,1.75);
			\draw [fill](5.25,1.75) to (5.75,1.75) to(5.75,2) to (5.25,2) to (5.25,1.75);
			
				\draw [fill](0.25,2.5) to (0.75,2.5) to(.75,2.75) to (0.25,2.75) to (.25,2.5);
			\draw [fill](1.25,2.5) to (1.75,2.5) to(1.75,2.75) to (1.25,2.75) to (1.25,2.5);
			\draw [fill](2.25,2.5) to (2.75,2.5) to(2.75,2.75) to (2.25,2.75) to (2.25,2.5);
			\draw [fill](3.25,2.5) to (3.75,2.5) to(3.75,2.75) to (3.25,2.75) to (3.25,2.5);
			\draw [fill](4.25,2.5) to (4.75,2.5) to(4.75,2.75) to (4.25,2.75) to (4.25,2.5);
			\draw [fill](5.25,2.5) to (5.75,2.5) to(5.75,2.75) to (5.25,2.75) to (5.25,2.5);
			
			\draw [fill](0.25,3.25) to (0.75,3.25) to(.75,3.5) to (0.25,3.5) to (.25,3.25);
			\draw [fill](1.25,3.25) to (1.75,3.25) to(1.75,3.5) to (1.25,3.5) to (1.25,3.25);
			\draw [fill](2.25,3.25) to (2.75,3.25) to(2.75,3.5) to (2.25,3.5) to (2.25,3.25);
			\draw [fill](3.25,3.25) to (3.75,3.25) to(3.75,3.5) to (3.25,3.5) to (3.25,3.25);
			\draw [fill](4.25,3.25) to (4.75,3.25) to(4.75,3.5) to (4.25,3.5) to (4.25,3.25);
			\draw [fill](5.25,3.25) to (5.75,3.25) to(5.75,3.5) to (5.25,3.5) to (5.25,3.25);

        	\draw[<->] (-.1,0) to (-.1,.75);
         	\draw (-.1,.4) node[anchor=east] {{\scriptsize $\varepsilon$}};
	      	\draw[<->] (0,3.80) to (1,3.8);
         	\draw [<->] (6.1,0) to (6.1,3.75);
         	\draw[->](7,3.5)node[anchor=west] {{\scriptsize $\Omega_{0}^{\varepsilon}$}} to (5.5,3.25);
         	\draw[->](7,3.5) to (5.5,2.70);
         	\draw[->](7,3.5)node[anchor=west] {{\scriptsize $\Omega_{0}^{\varepsilon}$}} to (5.5,3.25);
         	\draw[->](7,3.5) to (5.5,1.95);
         	\draw[->](7,0.5)node[anchor=west] {{\scriptsize $\es Z$}} to (5.85,0.1);
         		\draw[->](7,2.5)node[anchor=west] {{\scriptsize $\Omega_{\varepsilon}$}} to (5.5,1.4);
         		\draw[->](7,2.5)node[anchor=west] {{\scriptsize $\Omega_{\varepsilon}$}} to (4.5,1.4);
         		\draw[->](7,2.5)node[anchor=west] {{\scriptsize $\Omega_{\varepsilon}$}} to (5.5,.6);
			\end{tikzpicture}
			\caption{Schematic representation of the geometry corresponding to the microscopic model.}
			\label{fig2}
		\end{center}
	\end{figure}

Let  $Y\subset \mathbb{R}^2$ be a unit square in $\mathbb{R}^2$. We define the standard cell $Z$ as $Y$ having as inclusion an impenetrable compact object $Z_0$ called obstacle that is placed inside $Y$ ($i.e.$ $Z=Y\backslash Z_0$). We assume $\partial Z_0$ has Lipschitz boundary and $\partial Y\cap \partial Z_0=\emptyset .$ We consider that $\partial Z_0 $ has two parts, namely $\Gamma_D$ and $\Gamma_N$ (i.e. $\partial Z_0=\Gamma_D \cup \Gamma_N$ and $\Gamma_D \cap \Gamma_N=\emptyset$) with $|\Gamma_D|\geq 0,|\Gamma_N|>0$. We define the pore skeleton to be 
\begin{equation*}
    \Omega_{0}^\es:=\left\{\bigcup_{(k_1,k_2)\in \mathbb{N}\times \mathbb{N}} \{\es(Z_0+\Sigma_{i=1}^2 k_i e_i)\}\right\},
\end{equation*}
where $ \es>0$ and  $\{e_1,e_2\}$ is the orthonormal  basis of $\mathbb{R}^2$. We define the pore space and its internal boundaries as
\begin{equation*}
    \Omega_\es:=\mathbb{R}^2-\Omega_0^\es ,
\end{equation*}
\begin{equation*}
    \Gamma_{N}^\es:= \left\{\bigcup_{(k_1,k_2)\in \mathbb{N}\times \mathbb{N}} \{\es(\Gamma_N+\Sigma_{i=1}^2 k_i e_i)\}\right\}
\end{equation*}
and
\begin{equation*}
    \Gamma_{D}^\es:= \left\{\bigcup_{(k_1,k_2)\in \mathbb{N}\times \mathbb{N}} \{\es(\Gamma_D+\Sigma_{i=1}^2 k_i e_i)\}\right\},
\end{equation*}
respectively. We denote $n_\es, n_y$ as unit normal vector on $ \Gamma_{N}^\es , \Gamma_{N}$ respectively and directed outward with respect to $\Omega_\es$.
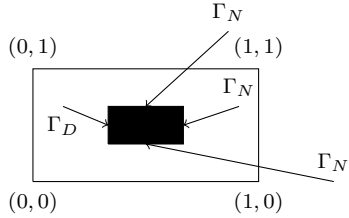
\begin{figure}[ht]
	\begin{center}
		\begin{tikzpicture}
		\draw (0,0) node [anchor=north] {{\scriptsize $(0,0)$}} to (3,0) node [anchor=north] {{\scriptsize $(1,0)$}}  to (3,1.5)node [anchor=south] {{\scriptsize $(1,1)$}} to (0,1.5)node [anchor=south] {{\scriptsize $(0,1)$}} to (0,0);
		\draw [fill] (1,.5) to (2,.5) to (2,1) to (1,1) to (1,.5);
		\draw[<-](1,0.75) to (0.4,1) node[anchor=north] {{\scriptsize $\Gamma_{D}$}};
		\draw[<-](2.0,0.75) to (2.74,1) node[anchor=south] {{\scriptsize $\Gamma_{N}$}};
			\draw[<-](1.5,1.0) to (2.6,2) node[anchor=south] {{\scriptsize $\Gamma_{N}$}};
			\draw[<-](1.5,0.5) to (4.0,-0.0) node[anchor=south] {{\scriptsize $\Gamma_{N}$}};
		\end{tikzpicture}
		\caption{Standard cell $Z$ exhibiting a rectangular obstacle $Y_0$ placed in the center.  Here $\Gamma_N$ and $\Gamma_D$ are chosen arbitrarily; see Remark \ref{R1} for details.}
		\label{scell}
	\end{center}
\end{figure}
\\
  \par  We consider the following reaction-diffusion-convection problem
\begin{align}
    \frac{\partial u^{\varepsilon}}{\partial t} +\mathrm{div}(-D^{\varepsilon}\nabla u^{\varepsilon}+ \frac{1}{\es}B^{\varepsilon}P(u^{\varepsilon}))&=f^{\varepsilon}  \,\,&\mbox{on}& \,\,&  \Omega_{\varepsilon} \times (0,T),\label{meq}\\
    (-D^{\varepsilon}\nabla u^{\varepsilon}+ \frac{1}{\es}B^{\varepsilon}P(u^{\varepsilon}))\cdot n_{\es}&=\es g_N^\es &\mbox{on}&   & \,\Gamma_N^\es \times (0,T),\label{meqbcn}\\
    u^\es&=\es^\gamma g_D^\es &\mbox{on}&   &\, \Gamma_D^\es \times (0,T)\label{meqbcd},\\
     u^\es(0)&=g &\mbox{in}&   &\, \overline{\Omega}_\es\label{meqic},
\end{align}
where $f^\es:\Omega_\es\rightarrow \mathbb{R}$, $g_N^\es:\Gamma_N^\es\rightarrow\mathbb{R}$ and $ g_D^\es:\Gamma_D^\es\rightarrow\mathbb{R}$ are given functions, $\gamma>2$, $D^\es(x_1,x_2):=D(x_1/\es, x_2/\es)$ for $(x_1,x_2)\in \Omega_\es$, where $D$ is a $2\times 2$ matrix with positive entries and $Z$--periodic defined in the standard unit cell $Z$, $B^\es(x_1,x_2):=B(x_1/\es, x_2/\es)$ where $B$ is a $2\times 1$ vector with positive entries and $Z$--periodic. 
 What concerns the nonlinear drift $P(\cdot):\mathbb{R}\rightarrow\mathbb{R}$, we consider two cases. To derive the upscaled equation in Section \ref{upscalemodel}, we take $P(\cdot)$ in the form \begin{equation}\label{p3}
    P(r):=	a_{0}+a_{1}r+\cdots+a_{m}r^{m},
\end{equation}
where $a_{k}\in \mathbb{R} \mbox{ for }k \in \mathbb{N}$. However, the well--posedness analysis of the corresponding results is much harder to reach. So, from Section \ref{existence} we use the special case of \eqref{p3}, which is
\begin{equation}\label{p}
    P(r):=r(1-r).
\end{equation}
The particular structure of the drift shown in  \eqref{p} is derived as  a mean-field limit for a totally asymmetric simple exclusion process (TASEP) on a lattice; see \cite{CIRILLO2016436} for details. 


\begin{remark}\label{R1}
Note that the structure of $\Gamma_N$ and $\Gamma_D$ need not be same as shown in the Fig. \ref{scell}. But should satisfy the conditions $\Gamma_N\cup \Gamma_D=\partial Z_0, \Gamma_N\cap \Gamma_D=\emptyset, |\Gamma_N|>0$ and $|\Gamma_D|\geq0$.
\end{remark}
\section{Assumptions}\label{assumption}
We consider the following restrictions on data and model parameters. We summarize them in the assumptions \ref{A1}--\ref{Af}, viz.
 \begin{enumerate}[label=({A}{{\arabic*}})]
 
 \item 
 For all $\eta\in \mathbb{R}^2$ there exists $\theta>0$  such that 
 \begin{equation*}\label{el}
 \theta \|\eta \|^{2} \leq
 \eta^{t}D\eta .
 \end{equation*}
 \label{A1}
\item 
$B:C^1_{\#}(Z) \rightarrow\mathbb{R}^2$ satisfies
\begin{equation*}\label{}
\begin{cases}
 	 \mathrm{div}B=0\hspace{.4cm}\mbox{in} \hspace{.4cm}  (0,T)\times Z\\
 	  B\cdot n_y =0 \hspace{.4cm}\mbox{on} \hspace{.4cm}  (0,T)\times \Gamma_N\label{boundaryb}
 	 \end{cases}
 	 \end{equation*}.
 	 \label{A2}
  \item For $(t,x)\in (0,T)\times \Omega_\es$,$f^\es(t,x):=f(t,\frac{x}{\es})$  such that $f \in L^\infty(0,T;L_\# ^2(Z));$
 
 \label{A3}
\item For $(t,x)\in (0,T)\times \Gamma_N^\es$, $g_N^\es(t,x):=g_N(t,\frac{x}{\es})$ such that $g_N\in L^\infty(0,T;L_\#^2(\Gamma_N)$ and \\
for $(t,x)\in (0,T)\times \Gamma_D^\es$, $g_D^\es(t,x):=g_D(t,\frac{x}{\es})$ such that $g_D\in L^2(0,T;L^2(\Gamma_D));$
\label{A4}
\item $g:\mathbb{R}^2\rightarrow \mathbb{R}^+\cup \{0\}$ such that
\begin{equation*}
    g\in L^\infty (\mathbb{R})\cap L^2(\mathbb{R}^2);
\end{equation*}\label{A5}
\item The inequality
\begin{equation*}
   \int_Z f\, dy-\int_{\Gamma_N}g_N d\sigma_y\geq 0
\end{equation*}
holds.\label{Af}
 \end{enumerate}
 A few comments about these assumptions are in order: Assumptions (A1), (A2), and (A6) have a clear physical justification, while (A3) and (A4) are of technical nature. 
 In particular, \ref{A2} requires the incompressibility of the drift and also mimics our expectation that particles are unable to penetrate the imposed obstacles through $\Gamma_N$ (at least not when they are travelling along the normal). 
In \ref{A3} and \ref{A4}, we considered that the functions $f^\es,g_N^\es,$ and $g_D^\es$ are depending only on the variables $t$ and $\frac{x}{\es}$, even though it is possible to consider the respective functions to be depending on the triplet $(t,x,\frac{x}{\es})$. In such case,  additional regularity is needed with respect to the second variable.

\section{Derivation of the upscaled model}\label{upscalemodel}

\subsection{Formal two-scale asymptotic expansions with drift}
To  upscale  the microscopic problem \eqref{meq}--\eqref{meqic}, we make use of the  method of two-scale asymptotic expansions with drift introduced in \cite{ALLAIRE20102292}, which later on turned into a rigorous tool  in \cite{hutridurga}  by means of the concept two-scale convergence with drift promoted in \cite{marusik2005}. 
We start with stating a Lemma needed to handle the solvability of one of the many  auxiliary problems arising in the proposed asymptotic expansion procedure. 

\begin{lemma}\label{lemma1}
Let $f_1\in L^2(Z), g_1\in L^2(\Gamma_N)$ be given functions. Then under the assumption \ref{A1}--\ref{A4}, the boundary value problem
\begin{align}
     -\nabla_y\cdot\left( D \nabla_y v\right)+\nabla_y \cdot \left(B P'(u_0)v\right)&=f_1 \hspace{.4cm}&\mbox{on} \hspace{.4cm} & (0,T)\times Z\label{1cell}\\
    \left( -D\nabla_y v+BP'(u_0)v\right)\cdot n_y &= g_1 \hspace{.4cm}&\mbox{on} \hspace{.4cm} & (0,T)\times \Gamma_N\label{1cellbn}\\
    v&=0\hspace{.4cm}&\mbox{on} \hspace{.4cm} & (0,T)\times \Gamma_D\label{1cellbd}\\
    v\,\, &\mbox{is Z-periodic},&&
\end{align}
 has a unique solution  $v\in H^1_{\#}(Z)$ if and only if the compatibility condition
\begin{equation}\label{compat}
    \int_Z f_1\, dy=\int_{\partial Z} g_1\,d\sigma_y
\end{equation}
is satisfied.
\end{lemma}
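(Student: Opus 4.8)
The plan is to recast \eqref{1cell}--\eqref{1cellbd} in weak form and to solve it with the Lax--Milgram/Fredholm machinery, the only nonstandard ingredient being the convective term. First I would introduce the Hilbert space $V:=\{v\in H^1_\#(Z):v=0\text{ on }\Gamma_D\}$ and, testing the equation against $\phi\in V$, integrating by parts and using the boundary conditions \eqref{1cellbn}--\eqref{1cellbd} together with $Z$-periodicity to cancel the outer-boundary contributions, arrive at $a(v,\phi)=\ell(\phi)$ with $a(v,\phi)=\int_Z D\nabla_y v\cdot\nabla_y\phi\,dy-P'(u_0)\int_Z v\,(B\cdot\nabla_y\phi)\,dy$ and $\ell(\phi)=\int_Z f_1\phi\,dy-\int_{\Gamma_N}g_1\phi\,d\sigma_y$. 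Here I would first record the simplification $\nabla_y\cdot(BP'(u_0)v)=P'(u_0)\,B\cdot\nabla_y v$, valid because $u_0$ does not depend on the fast variable $y$ and $\mathrm{div}_yB=0$ by \ref{A2}.

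The heart of the argument is the structural observation that the convective term is skew and hence invisible on the diagonal: integrating by parts once more, $\int_Z v\,(B\cdot\nabla_y v)\,dy=\tfrac12\int_Z B\cdot\nabla_y(v^2)\,dy=\tfrac12\big(-\int_Z(\mathrm{div}_yB)\,v^2\,dy+\int_{\partial Z}(B\cdot n_y)\,v^2\,d\sigma_y\big)=0$, where I use $\mathrm{div}_yB=0$ and $B\cdot n_y=0$ on $\Gamma_N$ (both from \ref{A2}), $v=0$ on $\Gamma_D$, and periodic cancellation on $\partial Y$. Consequently $a(v,v)=\int_Z D\nabla_y v\cdot\nabla_y v\,dy\ge\theta\|\nabla_y v\|_{L^2(Z)}^2$ by the ellipticity \ref{A1}. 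Boundedness of $a$ and $\ell$ is routine from the boundedness and periodicity of $D$ and $B$, the boundedness of $P'(u_0)$, the trace inequality, and \ref{A3}--\ref{A4}.

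From here I would split into two cases. If $|\Gamma_D|>0$, the Poincaré inequality on $V$ upgrades the above into full coercivity $a(v,v)\ge c\|v\|_{H^1(Z)}^2$, and since Lax--Milgram does not require symmetry it yields a unique $v\in V$ for every right-hand side. If $|\Gamma_D|=0$, then $V=H^1_\#(Z)$ contains the constants and coercivity holds only modulo constants; I would then view the convective term as a compact perturbation of the coercive elliptic part (the embedding $H^1_\#(Z)\hookrightarrow L^2(Z)$ is compact by Rellich) and invoke the Fredholm alternative. The homogeneous problem forces $\nabla_y v=0$, so its kernel is exactly the constants; testing $a(v,\cdot)$ against a constant shows the constants also span the kernel of the adjoint, whence solvability holds if and only if $\ell$ annihilates the constants, i.e. $\int_Z f_1\,dy=\int_{\Gamma_N}g_1\,d\sigma_y$, which is precisely \eqref{compat}; uniqueness is then understood in $H^1_\#(Z)/\mathbb{R}$ (or after normalizing $\int_Z v\,dy=0$).

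Finally, for the necessity of \eqref{compat} I would simply integrate \eqref{1cell} over $Z$, apply the divergence theorem, and use the same three facts (periodic cancellation, $B\cdot n_y=0$ on $\Gamma_N$, $v=0$ on $\Gamma_D$) to obtain $\int_Z f_1\,dy=\int_{\partial Z}g_1\,d\sigma_y$ directly. I expect the main obstacle to be the non-symmetry introduced by the drift: one must exploit the divergence-free/no-flux structure of $B$ to show that the convective term neither destroys coercivity (it vanishes on the diagonal) nor changes the Fredholm index (it is a compact, lower-order perturbation), and then be careful about the quotient by constants in the pure-periodic case so that the phrase \emph{unique solution in} $H^1_\#(Z)$ is correctly interpreted.
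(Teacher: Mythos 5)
The paper does not actually write out a proof of this lemma---it only cites the classical Fredholm alternative (Lemma 1.3.21 of \cite{allaireshape2002})---and your argument is precisely that standard argument filled in: weak formulation in $V=\{v\in H^1_\#(Z):v=0\ \text{on}\ \Gamma_D\}$, cancellation of the drift term on the diagonal via $\mathrm{div}_yB=0$ and $B\cdot n_y=0$ from \ref{A2}, Lax--Milgram when $|\Gamma_D|>0$, and the Fredholm alternative with kernel (and adjoint kernel) equal to the constants when $|\Gamma_D|=0$, which yields \eqref{compat} as orthogonality of the data to constants. So the proof is correct and follows the same route the paper intends, and is in fact more careful than the paper about the interpretation of uniqueness modulo constants. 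The one step that does not fully close is your necessity argument in the case $|\Gamma_D|>0$: integrating \eqref{1cell} over $Z$ and applying the divergence theorem leaves the a priori unknown flux $-\int_{\Gamma_D}(D\nabla_yv)\cdot n_y\,d\sigma_y$, which the Dirichlet condition $v=0$ on $\Gamma_D$ does not annihilate, so \eqref{compat} is genuinely necessary only when $|\Gamma_D|=0$ (and indeed your own Lax--Milgram case shows unconditional solvability when $|\Gamma_D|>0$, contradicting the ``only if''); this imprecision is inherited from the statement of the lemma itself rather than introduced by your method.
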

\begin{proof}
The proof of this statement follows by standard argument involving the classical Fredholm alternative, for details we refer the reader to Lemma 1.3.21 of \cite{allaireshape2002}.
\end{proof}

\par As starting point of the upscaling work, we assume $u^\es$ satisfies the following infinite series expansion 
\begin{equation}\label{ansatz}
    u^\es (t,x)=\sum_{k=0}^\infty \es ^k u_k\left(t,x-\frac{B^*t}{\es},\frac{x}{\es}\right),
\end{equation}
where the function $u_k(t,x,y)$ is $Z$-periodic in the variable $y\in Z$ for any $k\in \mathbb{N}\cup \{0\}$, the vector $B^*$ is the effective drift, whose value will be identified  at a later stage. Alternatively, one could also the general form 
\begin{equation*}\label{ansatz2}
    u^\es (t,x)=\sum_{k=0}^\infty \es ^k u_k\left(t,x-\frac{B^*(t)}{\es},\frac{x}{\es}\right).
\end{equation*}
In this context, such choice leads to $B^*(t)=B^*t$. 
\newline
We use the transformation $X=x-\frac{B^*t}{\es}$ and the chain rule for the differentiation, we obtain the following identities:
\begin{align}
    \frac{\partial }{\partial t}\left(u^\es\left(t,x-\frac{B^*t}{\es},\frac{x}{\es}\right) \right)&=\frac{\partial }{\partial t}u^\es\left(t,x-\frac{B^*t}{\es},\frac{x}{\es}\right) -\frac{B^*}{\es}\nabla_X u^\es\left(t,x-\frac{B^*t}{\es},\frac{x}{\es}\right) \label{timeder}\\
    \nabla \left(u^\es\left(t,x-\frac{B^*t}{\es},\frac{x}{\es}\right) \right)&=\nabla_X u^\es\left(t,x-\frac{B^*t}{\es},\frac{x}{\es}\right)+\frac{1}{\es} \nabla_y u^\es\left(t,x-\frac{B^*t}{\es},\frac{x}{\es}\right)
\end{align}
\begin{multline}
    \nabla\cdot D^\es \nabla \left(u^\es\left(t,x-\frac{B^*t}{\es},\frac{x}{\es}\right) \right)=\nabla_X\cdot D^\es \nabla_X u^\es\left(t,x-\frac{B^*t}{\es},\frac{x}{\es}\right)
    +\frac{1}{\es} \nabla_X\cdot D^\es \nabla_y u^\es\left(t,x-\frac{B^*t}{\es},\frac{x}{\es}\right)\\
   +\frac{1}{\es}\nabla_y\cdot D^\es \nabla_X u^\es\left(t,x-\frac{B^*t}{\es},\frac{x}{\es}\right)
   +\frac{1}{\es^2}\nabla_y\cdot D^\es \nabla_y u^\es\left(t,x-\frac{B^*t}{\es},\frac{x}{\es}\right)\label{diffexp}.
\end{multline}
For convenience, we denote  $\nabla_X$ as $\nabla_x$.
Using  \eqref{ansatz} and on the fact that the nonlinear term $P(\cdot)$ arising in \eqref{meq} satisfies  $P\in C^\infty(\mathbb{R})$, we can write the following power series expansion of $P$ around some $u_0$:
\begin{equation}\label{taylor}
P(u^\es)=P(u_0)+P'(u_0)(u^\es-u_0)+\frac{1}{2}P''(u_0)(u^\es-u_0)^{2}+\sum_{i=3}^\infty P^n(u_{0})(u^\es-u_0)^{n}.
\end{equation}
Later on, we will treat $u_0$ as the limit of $u_\es$ when $\es \rightarrow 0$.
 Now, substituting \eqref{ansatz} into \eqref{taylor}, we get
 \begin{equation}\label{pu}
 \begin{aligned}
P(u^\es)&=P(u_0)+\es P'(u_0)u_1+\es^2 (P'(u_0) u_2+ \frac{1}{2}P''(u_0)u_1^{2}+\sum_{i=3}^\infty P^n(u_{0})(u^\es-u_0)^{n}\\
&=P(u_0)+\es P'(u_0)u_1+\es^2 (P'(u_0) u_2+ \frac{1}{2}P''(u_0)u_1^{2})+O(\es^3).
\end{aligned}
\end{equation}
Using \eqref{pu} and the chain rule of differentiation, we have
\begin{equation}\label{driftexp}
\begin{aligned}
    \nabla \cdot \left(B^\es P(u_\es)\right) = &\frac{1}{\es}\nabla_y\cdot\left(  B^\es P(u_0)\right)+\es^0\left(\nabla_x \cdot B^\es P(u_0)+\nabla_y \cdot \left(B^\es P'(u_0)u_1\right)\right)\\
    &\hspace{1cm}+\es\left( \nabla_x \cdot\left( B^\es P'(u_0)u_1\right)+\nabla_y \cdot \left(B^\es P''(u_0)u_1^2\right)\right)+O(\es^3).
    \end{aligned}
\end{equation}
 Substituting \eqref{ansatz} into \eqref{meq}-\eqref{meqbcd}, using \eqref{timeder}--\eqref{diffexp} and \eqref{driftexp}, and finally, collecting $\es^{-2}$ order terms from \eqref{meq}, $\es^{-1}$ order terms from \eqref{meqbcn} and also $\es^{0}$ order terms from \eqref{meqbcd}, we obtain:
\begin{align}
   - \nabla_y\cdot D(y) \nabla_y u_0+\nabla_y \cdot \left(B(y)P(u_0)\right)&=0 \hspace{1cm}&\mbox{on \hspace{1cm}} & (0,T)\times Z\label{xe1}\\
   \left( -D(y)\nabla_y u_0+B(y)u_0 \right)\cdot n_y&=0\hspace{1cm}&\mbox{on \hspace{1cm}} & (0,T)\times \Gamma_N \label{xe2}\\
    u_0&=0  \hspace{1cm}&\mbox{on \hspace{1cm}} & (0,T)\times \Gamma_D.
\end{align}
Note that \eqref{xe2} implies $\left( -D(y)\nabla_y u_0 \right)\cdot n_y=0$ on $(0,T)\times \Gamma_N$. We now show that $u_0(t,\cdot)$ depends on $x$, but it does not depend on $y$. This is a crucial step in the derivation of the upscaled equations presented here. Using a straightforward integration by parts, \ref{A2}, the periodicity of $B(\cdot)$, and the periodicity of $u_0(t,x,\cdot)$, we have 
\begin{equation}
\begin{aligned}\label{bu0}
    \int_Z B(y) u_0 ^n \cdot \nabla_y u_0\, dy&= \frac{1}{n+1} \int_Z B(y) \cdot \nabla_y (u_0^{n+1})\, dy\\
    &=\frac{1}{n+1}\left(-\int_Z (\nabla_y \cdot B(y))\,  u_0^{n+1} \, dy+ \int_{\partial Z}u_0^{n+1}B(y)   \cdot n_y d\sigma_y\right)\\
    &=0.
\end{aligned}
\end{equation}
Relation \eqref{bu0} together with the structure of $P(\cdot)$ allow us to obtain
\begin{equation}\label{bu01}
     \int_Z B(y) P(u_0) \cdot \nabla_y u_0\, dy=0.
\end{equation}
Combining \ref{A1}, \ref{A2},\eqref{xe1}  the integration by parts and the $Z-$periodicity property of $B(y), D(y),$ and of $u_0(t,x,y)$ $(y\in Z)$, with \eqref{bu01} leads to
\begin{equation}\label{yind}
    \begin{aligned}
        \theta\int_Z  |\nabla_y u_0|^{2}\, dy &\leq \int_Z \nabla_y u_0\cdot D(y) \nabla_y u_0\, dy \\
        &=-\int_Z \nabla_y\cdot (D(y) \nabla_y u_0) u_0\, dy+\int_{\partial Z}u_0D(y) \nabla_y u_0 \cdot n_y d\sigma_y\\
        &=-\int_Z \nabla_y\cdot (B(y) P(u_0)) u_0\, dy+\int_{\partial Z}u_0D(y) \nabla_y u_0 \cdot n_y d\sigma_y\\
        &=\int_ZB(y) P(u_0) \nabla_y u_0\, dy-\int_{\partial Z}u_0B(y) P(u_0) \cdot n_y d\sigma_y+\int_{\partial Z}u_0D(y) \nabla_y u_0 \cdot n_y\, d\sigma_y\\
        &=0.
    \end{aligned}
\end{equation}
We can rely on \eqref{yind}, to conclude that $u_0$ is independent on $y$, i.e.

\begin{equation}\label{xind}
u_0(t,x,y)=u_0(t,x)\hspace{2cm}\mbox{for} \hspace{0.5cm}x\in \Omega_\es\mbox{\,\,and\,\,} \, t\in (0,T).
\end{equation}

\par Collecting now the $\es^{-1}$ order terms from \eqref{meq}, the $\es^0$ order terms from \eqref{meqbcn}, the $\es^1$ order terms from \eqref{meqbcd} and using \eqref{xind}, we get
    \begin{multline}\label{e2}
         -\nabla_y\cdot D(y) \nabla_x u_0-\nabla_y\cdot D(y) \nabla_y u_1-B^*\nabla_x u_0\\
         +\nabla_x \cdot \left(B(y)P(u_0)\right)+\nabla_y \cdot \left(B(y)P'(u_0)u_1\right)=0\hspace{.4cm}\mbox{on} \hspace{.4cm}  (0,T)\times Z,
     \end{multline}
    \begin{align}
    \left(-D(y)\nabla_x u_0-D(y)\nabla_y u_1+B(y)P'(u_0)u_1\right)\cdot n_y&=0 \hspace{.4cm}&\mbox{on} \hspace{.4cm} & (0,T)\times \Gamma_N,\label{e2n}\\
    u_1&=0 \hspace{.4cm}&\mbox{on} \hspace{.4cm} & (0,T)\times \Gamma_D\label{e2d}.
     \end{align}
By \ref{A2}, we ensure 
\begin{equation}\label{ch1}
    \nabla_x \cdot\left( BP(u_0)\right)=B(y)\cdot P'(u_0)\nabla_x u_0
\end{equation}
 To obtain the value of $B^*$ and some information on the oscillating  structure of $u_1$, we introduce a cell problem related to \eqref{e2}-\eqref{e2d}. The structure of problem \eqref{e2}--\eqref{e2d} and jointly with equation \eqref{ch1} allow us to look for $u_1$ in the form
\begin{equation}\label{wu}
    u_1(t,x,y)=W(y)\cdot \nabla_xu_0+\Tilde{u}_0(x),
\end{equation}
where $\Tilde{u}_0$ is some given function and  $W(y):=(w_1(y),w_2(y))$. The components  $w_i$ $(i\in\{1,2\})$ satisfy the following \emph{cell problems:}
\begin{align}
    -\nabla_y \cdot (D(y) \nabla_y w_i)+\nabla_y \cdot \left(B(y) P'(u_0)w_i\right)&=\nabla_y \cdot (D(y) e_i)+B^*\cdot e_i-B(y)&\cdot P'(u_0) e_i&\nonumber\\
   & \hspace{.4cm}&\mbox{on} \hspace{.4cm} & (0,T)\times Z,\label{cell}\\
    \left( -D(y)\nabla_y w_i+BP'(u_0)w_i\right)\cdot n_y &= \left( -D(y)e_i\right)\cdot n_y \hspace{.4cm}&\mbox{on} \hspace{.4cm} & (0,T)\times \Gamma_N,\label{cellbn}\\
    w_i&=0\hspace{.4cm}&\mbox{on} \hspace{.4cm} & (0,T)\times \Gamma_D,\label{cellbd}\\
    w_i\, &\mbox{ is $Z$--periodic}.\label{cellper}&&
\end{align}
Let $B_R(0)\subset\mathbb{R}^2$ be a ball of radius $R$ centered at orgin. Multiplying \eqref{cell} and \eqref{cellbn} by $ \chi_{B_R(0)}$, integrating the corresponding result  over $(0,T)\times B_R(0)$ and taking $R\rightarrow\infty$, we get
\begin{multline}\label{cell2}
    -\nabla_y \cdot D(y) \nabla_y w_i+\lim_{R\rightarrow\infty}\frac{\int_0^T\int_{B_R(0)} P'(u_0)dxdt}{T|B_R(0)|}\nabla_y \cdot B(y) w_i\\=\nabla_y \cdot D(y) e_i+B^*\cdot e_i-\lim_{R\rightarrow\infty}\frac{\int_0^T\int_{B_R(0)} P'(u_0)dxdt}{T|B_R(0)|}B(y)\cdot e_i \hspace{.4cm}\mbox{on} \hspace{.4cm}  (0,T)\times Z,
\end{multline}
\begin{align}
    \left( -D(y)\nabla_y w_i+\lim_{R\rightarrow\infty}\frac{\int_0^T\int_{B_R(0)} P'(u_0)dxdt}{T|B_R(0)|}B w_i\right)\cdot n_y &= \left( -D(y)e_i\right)\cdot n_y \hspace{.4cm}&\mbox{on} \hspace{.4cm} & (0,T)\times \Gamma_N,\label{cellbn2}\\
    w_i&=0\hspace{.4cm}&\mbox{on} \hspace{.4cm} & (0,T)\times \Gamma_D,\label{cellbd2}\\
    w_i\, &\mbox{ is $Z$--periodic}.&&
\end{align}
We define the quantity $\lim_{R\rightarrow\infty}$ average of $P'(\cdot) $ over $(0,T)\times B_{R}(0)$ as 
\begin{equation}
    \mathfrak{A}(u_0):=\lim_{R\rightarrow\infty}\frac{\int_0^T\int_{B_R(0)} P'(u_0)dxdt}{T|B_R(0)|}.
\end{equation} Using the compatibility condition \eqref{compat} stated in Lemma \ref{lemma1},  we deduce that there exists a unique solution to the problem \eqref{cell}-\eqref{cellbd}, if and only if 
\begin{equation}\label{core1}
    \int_Z \nabla_y\cdot D e_i+B^*\cdot e_i-\mathfrak{A}(u_0)B(y)\cdot e_i dy=\int_{\partial Z}\left( -D(y)e_i\right)\cdot n_y d\sigma_y.
    \end{equation} 
Equation \eqref{core1} allows us to fix the entries of the vector $B^*$ indicated in \eqref{ansatz}. Namely, we set
\begin{equation}\label{bstar}
    B^*\cdot e_i:=\frac{\int_Z -\nabla_y\cdot D e_i+\mathfrak{A}(u_0)B(y)\cdot  e_i dy +\int_{\partial Z}\left( -D(y)e_i\right)\cdot n_y d\sigma_y}{|Z|},
\end{equation}
where $|Z|$ denotes the volume of the cell $Z$.
\par Collecting the $\es^{0}$ order terms from \eqref{meq}, the $\es^1$ order terms from \eqref{meqbcn}, the $\es^2$ order terms from \eqref{meqbcd} and using \eqref{xind}, we get
\begin{multline}\label{u2e}
   -\nabla_y\cdot \left(D(y)\nabla_y u_2+ (BP'(u_0)u_2)\right)= -\partial_t u_0 +\nabla_x\cdot\left( D(y)\nabla_x u_0+ D(y)\nabla_y u_1\right)+\nabla_y\cdot D(y)\nabla_x u_1\\
    +B^* \cdot \nabla_x u_1-\nabla_x \cdot \left(B(y)P'(u_0)u_1\right)-\frac{1}{2} \nabla_y\cdot \left(B(y)P^{''}(u_0) u_1^2\right)\\
    +f \hspace{.4cm}\mbox{on} \hspace{.4cm}  (0,T)\times Z,
\end{multline}

\begin{equation}\label{u2ebn}
    \left( -D(y)\nabla_y u_2  +BP'(u_0)u_2\right)\cdot n_y=\left( D(y)\nabla_x u_1-B(y)P^{''}(u_0)u_1^2  \right)\cdot n_y+g_N\hspace{.4cm}\mbox{on} \hspace{.4cm}  (0,T)\times \Gamma_N,
\end{equation}

\begin{equation}\label{u2ebd}
    u_2=0\hspace{.4cm}\mbox{on} \hspace{.4cm}  (0,T)\times \Gamma_D,
\end{equation}
where $f$ and $g_N$ are restriction of $f^\es$ and $g_N^\es$ on $Z$ and $\Gamma_N$ respectively.
 \par Referring again to Lemma \ref{lemma1} as applied to the problem  \eqref{u2e}--\eqref{u2ebd}, to hold the existence of   $u_2$,  the following compatibility condition has to be fulfilled:
\begin{multline*}\label{}
    \int_Z \left(-\partial_t u_0 +\nabla_x\cdot\left( D(y)\nabla_x u_0+ D(y)\nabla_y u_1\right)+\nabla_y\cdot (D(y)\nabla_x u_1)\right.\\
   \left. +B^*\cdot \nabla_x u_1-\nabla_x \cdot\left( B(y)P'(u_0)u_1\right)-\frac{1}{2} \nabla_y\cdot \left(B(y)P^{''}(u_0) u_1^2 \right)   +f \right) \, dy\\
    =\int_{\Gamma_N}\left(\left( D(y)\nabla_x u_1-\frac{1}{2}BP^{''}(u_0)u_1^2\right)\cdot n_y +g_N\, \right)d\sigma_y.
\end{multline*}
Consequently, we obtain
\begin{multline}\label{e32}
    -|Z|\partial_t u_0 +\nabla_x\cdot  \int_Z D(y)\nabla_x u_0\, dy+\nabla_x\cdot \int_Z D(y)\nabla_y u_1\, dy
    +B^* \cdot \nabla_x\int_Z u_1\, dy\\
    -\nabla_x \cdot \int_Z B(y)P'(u_0)u_1\, dy 
    =-\int_Z f\, dy+\int_{\Gamma_N}g_N \, d\sigma_y.
\end{multline}
Now, by using \eqref{wu}, we replace $u_1$ in terms of $W$  and rearrange the terms of \eqref{e32}. This yields

\begin{equation*}\label{}
    \partial_t u_0 +\mathrm{div}( -D^*(u_0,W)\nabla_x  u_0)=\frac{1}{|Z|}\int_Z f\, dy + \frac{-1}{|Z|}\int_{\Gamma_N}g_N \, d\sigma_y.
\end{equation*}
 where the obtained effective transport tensor takes the explicit form:
 \begin{multline*}\label{}
     D^*(u_0,W):=\frac{1}{|Z|}\int_Z D(y)\left(I+\begin{bmatrix}
\frac{\partial w_1}{\partial y_1} & \frac{\partial w_2}{\partial y_1} \\
\frac{\partial w_1}{\partial y_2} & \frac{\partial w_2}{\partial y_2}
\end{bmatrix}\right)\, dy
\\
+\frac{1}{|Z|}B^*(u_0)\int_Z W(y)^t\, dy-\frac{P'(u_0)}{|Z|}\int_ZB(y) W(y)^t\, dy
 \end{multline*}
 Its worth noticing that the first term of $D^*(\cdot)$ refers to an averaged diffusion contribution, while the other two accounts for averaged drift effects. If $P(\cdot)$ is linear, then one recovers the results from  \cite{ALLAIRE20102292}.
 \subsection{Summary of the upscaled model equations}\label{upscaled}
In this section we summarize the obtained upscaled equations.
\begin{tcolorbox}
Find  $(u_0,W)$ satisfying the following system of equations with $ W=(w_1,w_2)$
 \begin{align}
    \partial_t u_0 +\mathrm{div}( -D^*(u_0,W)\nabla_x  u_0)&=\frac{1}{|Z|}\int_Z f\, dy + \frac{-1}{|Z|}\int_{\Gamma_N}g_N \, d\sigma_y &\mbox{on}& \hspace{.4cm}  (0,T)\times \mathbb{R}^2, \label{homeq1}\\
    u_0(0)&=g &\mbox{on}& \hspace{1.75cm}  \mathbb{R}^2
    \end{align}
    \end{tcolorbox}
    \begin{tcolorbox}
    \begin{align}
    -\nabla_y\cdot D(y) \nabla_y w_i+\mathfrak{A}(u_0)\nabla_y \cdot \left(B(y) w_i\right)&=\nabla_y D(y) e_i+B^* e_i-\mathfrak{A}(u_0)B(y)\cdot  e_i &\mbox{on}& \hspace{.4cm}  (0,T)\times Z,\label{cell3}\\
    \left( -D(y)\nabla_y w_i+B\mathfrak{A}(u_0)w_i\right)\cdot n_y &= \left( -D(y)e_i\right)\cdot n_y &\mbox{on}& \hspace{.4cm} (0,T)\times \Gamma_N,\label{cellbn3}\\
    w_i&=0&\mbox{on}& \hspace{.4cm}  (0,T)\times \Gamma_D,\label{cellbd3}\\
    w_i\, &\mbox{ is $Z$--periodic},&&\label{homeqf}
\end{align}
where $i\in \{1,2\}$. 
\end{tcolorbox}
\begin{tcolorbox}
The effective dispersion tensor $D^*$ is defined as
\begin{multline}\label{effectdiff}
     D^*(u_0,W)=\frac{1}{|Z|}\int_Z D(y)\left(I+\begin{bmatrix}
\frac{\partial w_1}{\partial y_1} & \frac{\partial w_2}{\partial y_1} \\
\frac{\partial w_1}{\partial y_2} & \frac{\partial w_2}{\partial y_2}
\end{bmatrix}\right)\, dy
\\
+\frac{1}{|Z|}B^*(u_0)\int_Z W(y)^t\, dy-\frac{P'(u_0)}{|Z|}\int_ZB(y) W(y)^t\, dy.
 \end{multline}
 \end{tcolorbox}
Note that this system is not only fully coupled, but it is also posed on two different spatial scales (micro and macro) where the variables $x\in\mathbb{R}^2$ and $y\in Z$ are defined.   The terminology ``\emph{effective dispersion tensor}" is taken from the porous media literature; see in particular the terminology used in the monograph \cite{bear1988dynamics} as well as \cite{vafai2015handbook}. 

We refer to the set of equations \eqref{homeq1}--\eqref{homeqf} together with (\ref{effectdiff}) as problem $P(\Omega)$.
 
 \section{Solvability of the upscaled problem \texorpdfstring{$P(\Omega)$}{TEXT}}\label{existence}
 \subsection{Structural properties of the dispersion tensor  \texorpdfstring{$D^*(\cdot)$}{TEXT}}

\begin{proposition}\label{L2}
Assume  \ref{A1}--\ref{A5}. Then  the effective dispersion tensor $D^*$ can be decomposed as $D^*=A^*+J^*$, where 
\begin{equation}\label{}
    [A^*]_{i,j}:=\frac{1}{|Z|}\int_Z D(y)(e_j+\nabla_yw_j)\cdot 
    (e_i+\nabla_y w_i)\, dy
    \end{equation}
and
\begin{align}\label{}
    [J^*]_{i,j}:=&\frac{P'(u_0)}{|Z|}\int_Z B(y)\cdot(w_ie_j-w_je_i)\, dy\\
    &+\frac{1}{|Z|}B^*(u_0)\cdot \int_Z (w_je_i-w_ie_j)-\frac{P'(u_0)}{|Z|}\int_Z w_iB(y)\cdot\nabla_y w_i\, dy,
    \end{align}
Furthermore, $A^*$ is symmetric, $J^*$ is anti-symmetric and for any $\xi \in \mathbb{R}^2$ there exist  $\alpha >0$ such that 
\begin{equation}\label{dst}
    \xi ^t D^* \xi\geq \alpha |\xi|^2,
\end{equation}
i.e., $D^*$ is uniformly positive definite.
\end{proposition}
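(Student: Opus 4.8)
The plan is to read the splitting $D^*=A^*+J^*$ off directly from the definition \eqref{effectdiff}, to identify symmetry and antisymmetry from the algebraic shape of the two pieces, and then to deduce \eqref{dst} by combining the coercivity \ref{A1} of $D$ with the $Z$-periodicity of the correctors.

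First I would rewrite the diffusive part of \eqref{effectdiff}: its $(i,j)$ entry equals $\frac{1}{|Z|}\int_Z D(y)(e_j+\nabla_y w_j)\cdot e_i\,dy$, and inserting $e_i=(e_i+\nabla_y w_i)-\nabla_y w_i$ splits it into exactly $[A^*]_{ij}$ plus the cross term $-\frac{1}{|Z|}\int_Z D(y)(e_j+\nabla_y w_j)\cdot\nabla_y w_i\,dy$. The remaining task is to show that this cross term, together with the two drift contributions $\frac{1}{|Z|}B^*\int_Z w_j$ and $-\frac{P'(u_0)}{|Z|}\int_Z B\,w_j$ appearing in \eqref{effectdiff}, collapses to $[J^*]_{ij}$. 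To carry this out I would test the weak form of the cell problem \eqref{cell3}--\eqref{homeqf} with $w_i\in H^1_\#(Z)$, which is admissible since $w_i=0$ on $\Gamma_D$: the periodic boundary contributions cancel in pairs, the $\Gamma_D$ terms drop because $w_i=0$ there, and the $\Gamma_N$ flux is replaced using \eqref{cellbn3}. A further integration by parts exploiting $\mathrm{div}\,B=0$ and $B\cdot n_y=0$ on $\Gamma_N$ (assumption \ref{A2}) then reorganizes the drift integrals into the stated antisymmetric shape. This bookkeeping is the laborious heart of the argument.

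Symmetry of $A^*$ is then immediate: because the diffusion tensor $D(y)$ is symmetric, $[A^*]_{ij}=\frac{1}{|Z|}\int_Z (e_i+\nabla_y w_i)^t D(y)(e_j+\nabla_y w_j)\,dy$ is a symmetric bilinear form of the two fields, so $[A^*]_{ij}=[A^*]_{ji}$. For $J^*$ I would check antisymmetry summand by summand: the first two terms change sign under $i\leftrightarrow j$ by inspection, and for the remaining drift term I would use $\int_Z B\cdot\nabla_y(w_iw_j)\,dy=0$ — which follows once more from $\mathrm{div}\,B=0$, $B\cdot n_y=0$ on $\Gamma_N$, $w_i=w_j=0$ on $\Gamma_D$ and periodicity — to get $\int_Z w_j\,B\cdot\nabla_y w_i\,dy=-\int_Z w_i\,B\cdot\nabla_y w_j\,dy$.

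Finally, for the coercivity \eqref{dst}: since $J^*$ is antisymmetric, $\xi^t J^*\xi=0$, hence $\xi^t D^*\xi=\xi^t A^*\xi$ for every $\xi\in\mathbb{R}^2$. Writing $w_\xi:=\sum_i\xi_i w_i$ and $\Phi_\xi:=\xi+\nabla_y w_\xi$, I obtain $\xi^t A^*\xi=\frac{1}{|Z|}\int_Z \Phi_\xi^t D(y)\Phi_\xi\,dy\geq\frac{\theta}{|Z|}\int_Z|\Phi_\xi|^2\,dy\geq0$ by \ref{A1}. This vanishes only when $\Phi_\xi=0$ a.e., i.e.\ when $\nabla_y w_\xi=-\xi$ is constant so that $w_\xi$ is affine; but an affine function is $Z$-periodic only if its linear part is zero, forcing $\xi=0$. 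Thus the symmetric matrix $A^*$ is positive definite, and since $\xi\mapsto\int_Z|\xi+\nabla_y w_\xi|^2\,dy$ is a continuous quadratic form that is strictly positive on the compact unit sphere, it attains a positive minimum; this furnishes the uniform constant $\alpha>0$ in \eqref{dst}. I expect the main obstacle to be the first step — matching the drift integrals to the cross term through the cell-problem weak form — while symmetry, antisymmetry and positive definiteness follow structurally thereafter.
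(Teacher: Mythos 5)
Your argument is essentially the paper's own proof: you read off the diffusive block of \eqref{effectdiff}, trade the cross term $-\frac{1}{|Z|}\int_Z D(e_j+\nabla_y w_j)\cdot\nabla_y w_i\,dy$ for the drift integrals by testing the cell problem for $w_j$ against $w_i$, obtain antisymmetry of the remaining $B$-bilinear form from $\mathrm{div}\,B=0$, $B\cdot n_y=0$ on $\Gamma_N$, $w_i=0$ on $\Gamma_D$ and periodicity, and conclude coercivity from $\xi^tJ^*\xi=0$ together with \ref{A1}. The only place you genuinely diverge is the last step: the paper expands $\int_Z|\xi+\nabla_y w_\xi|^2$ and discards the non-negative and cross terms to land on the explicit constant $\alpha=\theta|Z|$, whereas you argue that the form vanishes only for $\xi=0$ (an affine $Z$-periodic function has zero linear part) and extract $\alpha$ by minimizing over the unit sphere. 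Your route avoids having to justify that $\int_Z\xi\cdot\nabla_y w_\xi\,dy$ does not spoil the lower bound, which is a point the paper treats rather quickly; the price is that your $\alpha$ is non-explicit and a priori depends on the correctors $W$ (hence on $u_0$ through $\mathfrak{A}(u_0)$), so if one wants the positivity constant uniform in $u_0$ one should either note the trivial bound $\int_Z|\xi+\nabla_y w_\xi|^2\ge 0$ combined with the paper's expansion, or track how the minimum over the sphere depends on $\|W\|_{H^1_\#(Z)}$. Everything else is correct and matches the paper.
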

\begin{proof}
From  \eqref{effectdiff} we obtain
\begin{equation}\label{L2e1}
     [D^*(u_0,W)]_{i,j}=\frac{1}{|Z|}\int_Z D(y)\left(e_j+\nabla_y w_j\right)\cdot e_i\, dy
+\frac{1}{|Z|}B^*(u_0)\cdot \int_Z w_j e_i\, dy\\-\frac{P'(u_0)}{|Z|}\int_ZB(y)\cdot w_j e_i\, dy.
\end{equation}
Now, we consider \eqref{cell} for $w_j$, and multiply it with $w_i$. We get
\begin{multline}\label{L2e2}
    -\nabla_y\cdot( D(y) \nabla_y w_j)w_i+\nabla_y \cdot (B(y) P'(u_0)w_j)w_i-\nabla_y \cdot (D(y) e_j)w_i\\
    -B^*\cdot e_j w_i+B(y)\cdot P'(u_0) e_j w_i=0 \hspace{.4cm}\mbox{on} \hspace{.4cm}  (0,T)\times Z.
\end{multline}
Multiplying \eqref{L2e2} by $\frac{1}{|Z|}$ and integrating the result over $Z$ yields 
\begin{multline}\label{L2e3}
    \frac{1}{|Z|}\int_Z-\nabla_y\cdot( D(y) \nabla_y w_j)w_i\,dy+\frac{1}{|Z|}\int_Z\nabla_y \cdot (B(y) P'(u_0)w_j)w_i\,dy\\
    -\frac{1}{|Z|}\int_Z\nabla_y \cdot (D(y) e_j)w_i\,dy
    -\frac{1}{|Z|}B^*\cdot\int_Z e_j w_i\,dy+\frac{1}{|Z|}\int_ZB(y)\cdot P'(u_0) e_j w_i\,dy=0.
\end{multline}
Performing the integration by parts on the first three terms of \eqref{L2e3}, using \eqref{cellbn}--\eqref{cellper} as well as employing the  $Z-$periodicity of $D$ and $B$, gives
\begin{multline}\label{L2e4}
    \frac{1}{|Z|}\int_Z \nabla_y w_i\cdot D(y) \nabla_y w_j\,dy-\frac{P'(u_0)}{|Z|}\int_Z  (B(y) w_j)\cdot \nabla_yw_i\,dy\\
    +\frac{1}{|Z|}\int_Z  (D(y) e_j)\cdot \nabla_yw_i\,dy
    -\frac{1}{|Z|}B^*\cdot\int_Z e_j w_i\,dy+\frac{P'(u_0)}{|Z|}\int_ZB(y)\cdot  e_j w_i\,dy=0.
\end{multline}
Adding \eqref{L2e4} to \eqref{L2e1}, we are lead to the decomposition:
\begin{equation}\label{L2e5}
    D^*=A^*+J^*,
\end{equation}
where the terms $A^*$ and $J^*$ are 
\begin{equation}\label{L2e6}
    [A^*]_{i,j}=\frac{1}{|Z|}\int_Z D(y)(e_j+\nabla_yw_j)\cdot 
    (e_i+\nabla_y w_i)\, dy
    \end{equation}
    and
\begin{align}\label{L2e7}
    [J^*]_{i,j}=&\frac{P'(u_0)}{|Z|}\int_Z B(y)\cdot(w_ie_j-w_je_i)\, dy\\
    &+\frac{1}{|Z|}B^*(u_0)\cdot \int_Z (w_je_i-w_ie_j)-\frac{P'(u_0)}{|Z|}\int_Z w_iB(y)\cdot\nabla_y w_i\, dy,
    \end{align}
    respectively. Observe that $A^*$ refers to effective diffusion components, while $J^*$ includes components of the effective drift.\\
     Assumption  \ref{A2}, together with the integration by parts, and with the $Z-$periodicity of $w_i, w_j$,  and of $B$, yields
    \begin{align}
        \int_Z w_jB(y)\cdot \nabla_y w_i \, dy&=-\int_Z \nabla_y\cdot (B(y)w_j)  w_i \, dy+\int_{\partial Z} (B(y)w_j w_i)\cdot n_y\, d\sigma_y\label{L2e8}\\
        &=-\int_Z  B(y)\cdot \nabla_yw_j  w_i \, dy.\label{L2e9}
    \end{align}
    Considering the right--hand side of \eqref{L2e6}, we see that $A^*$ is a symmetric matrix. Inserting \eqref{L2e9} in the last term of \eqref{L2e7}, we see that the matrix $J^*$ can be written in the form of a skew-symmetric matrix. Hence, we conclude that the effective dispersion tensor $D^*$ of the upscaled problem \eqref{homeq1} can be write as sum of a symmetric matrix $A^*$, defined as \eqref{L2e6}, and a skew-symmetric matrix $J^*$, defined as \eqref{L2e7}.
    \par To prove the uniform positivity property of $D^*$, it is enough to prove that $A^*$ is uniformly positive definite. Note that since $D^*=A^*+J^*$ and because  $J^*$ is a $2\times 2$ skew symmetric matrix, we have $\xi^t J^* \xi =0$.
    
    Using the expression \eqref{L2e6}, we have
    \begin{equation*}
        A^*\xi \cdot \xi = \int_Z D(y)(\xi +\nabla_y\sum_{i=1}^{2}w_i\xi_i)\cdot 
    (\xi +\nabla_y\sum_{i=1}^{2}w_i\xi_i)\, dy.
    \end{equation*}
    Assumption \ref{A1} together with the periodicity property of $w_1$ and $w_2$ implies
    \begin{align*}
        A^*\xi\cdot \xi &\geq \theta \int_Z |\xi+\nabla_y(\sum_{i=1}^{2}\xi_i w_i)|^2\,dy\\
        &=\theta \int_Z |\xi|^2\, dy + \theta \int_Z|(\sum_{i=1}^{2}\xi_i w_i)|^2+2\theta \int_Z \xi \cdot \nabla_y (\sum_{i=1}^{2}\xi_i w_i)\, dy\\
        &\geq \theta |Z| |\xi|^2.
    \end{align*}
    Let $\alpha:=\theta |Z|$ we get \eqref{dst}.
\end{proof}

\begin{remark}
The decomposition stated in Proposition \ref{L2} can be obtained by computing
\begin{equation*}
A^*=\frac{D^*+(D^*)^T}{2}
   \quad\text{and}\quad 
 J^*=\frac{D^*-(D^*)^T}{2}.
\end{equation*}
\end{remark}
\subsection{Weak solvability on a bounded domain }\label{bdd}
In this section, we study weak solvability of our upscaled model \eqref{homeq1}--\eqref{homeqf}  posed on a bounded smooth domain. Specifically, we prove the existence of a weak solution for this problem. Later, using  techniques from \cite{HILHORST20071118} and our existence result for a bounded domain, we investigate the solvability  of $P(\Omega)-$ the upscaled problem posed in unbounded domain. 
\par Let $L>0$ be arbitrarily fixed. Take $\Omega_L\subset\mathbb{R}^2$ a domain with $\partial\Omega_L\in C^1$ and having diameter $2L$. We define  problem $P(\Omega_L)$ as follows
\begin{align}
    \partial_t u_0 +\mathrm{div}( -D^*(u_0,W)\nabla_x  u_0)&=\Tilde{f} &\mbox{on}& \hspace{.4cm}  (0,T)\times \Omega_L, \label{Lhomeq1}\\
    u_0&=0&\mbox{on}& \hspace{.4cm}  (0,T)\times \partial\Omega_L\\
    u_0(0,x)&=g &\mbox{on}& \hspace{.4cm}  x\in \Omega_L \label{bdic}\\
    -\nabla_y\cdot D(y) \nabla_y w_i+\mathfrak{A}(u_0)\nabla_y \cdot \left(B(y) w_i\right)&=\nabla_y \cdot D(y) e_i+B^* \cdot e_i-\mathfrak{A}(u_0)B(y)\cdot  e_i &\mbox{on}& \hspace{.3cm}  (0,T)\times Z,\label{Lcell3}\\
    \left( -D(y)\nabla_y w_i+\mathfrak{A}(u_0)Bw_i\right)\cdot n_y &= \left( -D(y)e_i\right)\cdot n_y &\mbox{on}& \hspace{.4cm} (0,T)\times \Gamma_N,\label{Lcellbn3}\\
    w_i&=0&\mbox{on}& \hspace{.4cm}  (0,T)\times \Gamma_D,\label{Lcellbd3}\\
    w_i\, &\mbox{ is $Z$--periodic},&&\label{Lcellp}
\end{align}
where \begin{equation*}
    \Tilde{f}=\frac{1}{|Z|}\int_Z f\, dy + \frac{-1}{|Z|}\int_{\Gamma_N}g_N \, d\sigma_y 
\end{equation*}
$D^*, B^*, D(y), B(y), P, f$,$g_N$ and $g$ are defined as \eqref{effectdiff}, \eqref{bstar}, \ref{A1}, \ref{A2}, \eqref{p}, \ref{A3}, \ref{A4} and \ref{A5} respectively. We refer to \eqref{Lhomeq1}--\eqref{Lcellp} as problem $P(\Omega_L).$ Note that if we consider $\Omega_L$ as a ball with radius $L$, then as $L\rightarrow \infty$ the problem $P(\Omega_L)$ is supposed to approximate $P(\Omega)$. We  see this as a regular asymptotic expansion. It is the aim of this section to make this assumption rigorous. 
\par We define the weak formulation of \eqref{Lhomeq1}--\eqref{Lcellp} as follows.
\begin{definition}\label{D1}
    The pair $(u_0,W)\in L^2(0,T;H^1(\Omega_L))\times [H_\#^1(Z)]^2$ is called weak solution to  $P(\Omega_L)$, if and only if the following identities are satisfied:
    \begin{align}
    \int_{\Omega_L} \partial_t u_0 \phi \, dx+ \int_{\Omega_L}\nabla \phi\cdot D^*(u_0,W) \nabla u_0  \, dx&=  \int_{\Omega_L}\left( \frac{1}{|Z|}\int_Z f\, dy + \frac{-1}{|Z|}\int_{\Gamma_N}g_Nd\sigma_y\right)\phi \, dx\label{wfbd1}\\
    \int_{Z}\nabla_y \psi\cdot D(y)\nabla_y w_i dy+ \mathfrak{A}(u_0)\int_{Z}w_iB(y)\cdot \nabla_y \psi\,  dy&=\int_Z \left(B^*\cdot e_i-B(y)\cdot \mathfrak{A}(u_0) e_i\right)\psi \, dy\label{wfbp}
\end{align}
for all $(\phi,\psi)\in H^1(\Omega_L)\times H^1_\#(Z)$ and $a.e.$ $t\in (0,T),$ together with the initial condition
\begin{equation}\label{wfic}
    u_0(0)=g \hspace{.5cm} \mbox{on} \hspace{.5cm} \overline{\Omega}_L
\end{equation}
\end{definition}
    \begin{theorem}\label{T1}
    Assume that \ref{A1}--\ref{A5} hold true. Then  $P(\Omega_L)$ admits  a solution in the sense of Definition \ref{D1}.

    \end{theorem}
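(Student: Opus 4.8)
The plan is to recast the coupled system $P(\Omega_L)$ as a fixed-point problem and to invoke Schauder's fixed-point theorem (Theorem 3 in Section 9.2.2 of \cite{evans2010partial}). The structural observation that makes this feasible is twofold: the cell unknown $W$ enters the parabolic equation \eqref{Lhomeq1} only through the tensor $D^*(u_0,W)$, and the cell problem \eqref{Lcell3}--\eqref{Lcellp} depends on $u_0$ \emph{only} through the scalar $\mathfrak{A}(u_0)$. This lets me decouple the system hierarchically. Concretely, I would fix $\bar u$ in the Banach space $X:=L^2((0,T)\times\Omega_L)$ and define $\mathcal{T}\colon X\to X$, $\mathcal{T}(\bar u)=u_0$, in three stages: (i) evaluate the scalar $\mathfrak{A}(\bar u)$ and, through \eqref{bstar}, the constant vector $B^*$; (ii) solve the \emph{linear} elliptic cell problem \eqref{Lcell3}--\eqref{Lcellp} for $W=W(\bar u)\in[H^1_\#(Z)]^2$; (iii) assemble $D^*(\bar u,W)$ via \eqref{effectdiff} and solve the \emph{linear} parabolic problem \eqref{Lhomeq1} with the prescribed Dirichlet and initial data to obtain $u_0=\mathcal{T}(\bar u)$.

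For stage (ii), the bilinear form associated with \eqref{Lcell3}--\eqref{Lcellp} is coercive on $H^1_\#(Z)$: the convective term $\mathfrak{A}(\bar u)\int_Z w_iB\cdot\nabla_y w_i\,dy$ vanishes after integration by parts thanks to the incompressibility and no-flux conditions of \ref{A2} together with $Z$-periodicity (this is exactly the computation \eqref{bu0}), so coercivity reduces to the ellipticity \ref{A1} of $D$. Existence, uniqueness, and an energy bound $\|W\|_{[H^1_\#(Z)]^2}\le C(1+|\mathfrak{A}(\bar u)|)$ then follow by Lax--Milgram, equivalently from Lemma \ref{lemma1} with the compatibility condition guaranteed by the very definition \eqref{bstar} of $B^*$. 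For stage (iii), the decisive input is Proposition \ref{L2}: by \eqref{dst} the tensor $D^*(\bar u,W)$ is uniformly positive definite with coercivity constant $\alpha=\theta|Z|$ that is \emph{independent} of $\bar u$ and $W$, so the linear parabolic problem is uniformly well-posed and possesses a unique weak solution in $L^2(0,T;H^1_0(\Omega_L))\cap H^1(0,T;H^{-1}(\Omega_L))$ by standard Galerkin theory.

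The core of the argument is then to verify the hypotheses of Schauder's theorem. Testing stage (iii) with $u_0$ itself, using the uniform coercivity \eqref{dst}, Gronwall's inequality, and the data bounds \ref{A3}--\ref{A5}, I would obtain an estimate on $\|u_0\|_{L^2(0,T;H^1_0)}+\|\partial_t u_0\|_{L^2(0,T;H^{-1})}$ by a constant $\rho_0$ that is uniform over $\bar u\in X$. Letting $K$ be the closure in $X$ of the ball $\{v:\|v\|_{L^2(0,T;H^1_0)}+\|\partial_t v\|_{L^2(0,T;H^{-1})}\le\rho_0\}$, the Aubin--Lions compactness lemma shows $K$ is compact in $X$; it is convex, and the uniform estimate gives $\mathcal{T}(X)\subset K$, hence $\mathcal{T}\colon K\to K$. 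Continuity of $\mathcal{T}$ on $K$ would be checked along a sequence $\bar u_n\to\bar u$ in $X$: one shows $\mathfrak{A}(\bar u_n)\to\mathfrak{A}(\bar u)$ (since $P'(r)=1-2r$ is affine, hence Lipschitz, $\mathfrak{A}$ is continuous on $X$), then $W(\bar u_n)\to W(\bar u)$ in $[H^1_\#(Z)]^2$ by linearity and the energy bound, then $D^*(\bar u_n,W(\bar u_n))\to D^*(\bar u,W(\bar u))$, and finally convergence of the associated parabolic solutions. A fixed point $u_0=\mathcal{T}(u_0)$, paired with $W=W(u_0)$, is then a weak solution in the sense of Definition \ref{D1}.

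The main obstacle I anticipate is the pointwise dependence of $D^*$ on $u_0$ through the factor $P'(u_0)$ in the last term of \eqref{effectdiff}: because $P'(r)=1-2r$ is unbounded, the tensor need not be bounded for $\bar u$ merely in $L^2$, and the resulting trilinear term $\int u_0\,\nabla\phi\cdot M\,\nabla u_0$ (with $M:=\tfrac{1}{|Z|}\int_Z B(y)W(y)^t\,dy$) does not close in the energy space in two dimensions. I would circumvent this by inserting a truncation $P'(T_M\,\cdot)$ at a level $M$ into the assembly of $D^*$, which renders $D^*$ uniformly bounded, $\|D^*\|_{L^\infty}\le\Lambda(M)$, while leaving the uniform coercivity of Proposition \ref{L2} intact; Schauder's theorem then delivers a weak solution of the truncated problem, and the truncation is shown inactive a posteriori once the bound $0\le u_0\le 1$ is available through the Kirchhoff transformation and comparison argument of the following subsection. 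A secondary point is the correct reading of the nonlocal mean $\mathfrak{A}(\cdot)$ on the bounded domain, which I would take to be the space-time average of $P'(u_0)$ over $(0,T)\times\Omega_L$; its boundedness and continuity as a scalar functional of $\bar u$ are precisely what make stages (i)--(ii) depend continuously on $\bar u$.
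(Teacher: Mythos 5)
Your proposal follows essentially the same route as the paper: the same hierarchical decoupling (evaluate $\mathfrak{A}$ and $B^*$, solve the linear cell problem by Lax--Milgram, solve the linear parabolic problem with the assembled $D^*$), the same energy estimates, Aubin--Lions compactness, sequential continuity, and Schauder's fixed-point theorem. The one genuine point of divergence is how the unboundedness of $P'(\bar u)$ in the assembly of $D^*$ is handled. The paper does not truncate: it builds the bound $\|v\|_{L^\infty((0,T)\times\Omega_L)}\le M$ (with $M$ determined by the data) directly into the definition of the fixed-point set $S$, and proves that the map preserves this bound via an auxiliary $L^\infty$ estimate (Proposition \ref{AT1} in the Appendix, obtained by splitting the solution as $h_1+h_2$ and a Duhamel argument); on $S$ one then has $\|[D^*(v,W)]_{i,j}\|_{L^\infty}\le C_3+C_4\|P'(v)\|_{L^\infty}$, which is what closes the estimates. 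Your truncation of $P'$ at level $M$ achieves the same effect and is a legitimate alternative, but the a posteriori removal of the truncation needs exactly the same ingredient: an $L^\infty$ bound on the fixed point at the level $M$ of the truncation. Your stated source for it --- ``the bound $0\le u_0\le 1$ \ldots through the Kirchhoff transformation and comparison argument of the following subsection'' --- is not quite right: Lemma \ref{L3} gives only $u_L\ge 0$ and Lemma \ref{L4} is a comparison principle between two solutions (used later for monotonicity in $L$); neither yields an upper bound, and in particular not the bound $u_0\le 1$. The estimate you actually need is the maximum-principle-type bound $\|u_0\|_{L^\infty}\le \|g\|_{L^\infty}+T\bigl(\|f\|_{L^\infty(0,T;L^2_\#(Z))}+\|g_N\|_{L^\infty(0,T;L^2_\#(\Gamma_N))}\bigr)$ of Proposition \ref{AT1}, so you would still have to supply that argument; once you do, your truncated scheme and the paper's constrained-set scheme are interchangeable.
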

    \begin{proof}
    We prove the existence of weak solutions to $P(\Omega_L)$, i.e. to \eqref{Lhomeq1}--\eqref{Lcellp} by using a variant of the classical Schauder's fixed point theorem (see Theorem 3 in section 9.2.2 of \cite{evans2010partial}).\\
    We define a map $Q:L^2(0,T;L^2(\Omega_L))\rightarrow L^2(0,T;L^2(\Omega_L))$  such that $Q(v):=p $, where $p$ is the solution to the following problem \eqref{1Lhomeq1}--\eqref{1Lcellp}, viz.
     \begin{align}
    \partial_t p +\mathrm{div}( -D^*(v,W)\nabla_x  p)&=\Tilde{f}  &\mbox{on}& \hspace{.4cm}  (0,T)\times \Omega_L, \label{1Lhomeq1}\\
     p&=0&\mbox{on}& \hspace{.4cm}  (0,T)\times \partial \Omega_L\\
    p(0,x)&=g &\mbox{for}& \hspace{.4cm}  x\in \overline{\Omega}_L \label{1Lic}\\
    -\nabla_y D(y) \nabla_y w_i+\nabla_y \cdot B(y) \mathfrak{A}(v)w_i&=\nabla_y\cdot D(y) e_i+B^* e_i-B(y)\cdot \mathfrak{A}(v) e_i &\mbox{on}& \hspace{.4cm}  (0,T)\times Z,\label{1Lcell3}\\
    \left( -D(y)\nabla_y w_i+B\mathfrak{A}(v)w_i\right)\cdot n_y &= \left( -D(y)e_i\right)\cdot n_y &\mbox{on}& \hspace{.4cm} (0,T)\times \Gamma_N,\label{1Lcellbn3}\\
    w_i&=0&\mbox{on}& \hspace{.4cm}  (0,T)\times \Gamma_D,\label{1Lcellbd3}\\
    w_i\, &\mbox{ is $Z$--periodic},&&\label{1Lcellp}
\end{align}
where $\Tilde{f}:=\frac{1}{|Z|}\int_Z f\, dy + \frac{-1}{|Z|}\int_{\Gamma_N}g_Nd\sigma_y$.  Since the problem \eqref{1Lcell3}--\eqref{1Lcellp} is independent of $p$ and is in fact a linear elliptic equation, Lax-Milgram Lemma (see Chapter 6 of \cite{evans2010partial}) ensures the existence of a unique solution $w_i\in H^1_\# (Z)/\mathbb{R}$ (if $|\Gamma_D|=0$ or  $w_i\in H^1_\# (Z)$ if $|\Gamma_D|>0$). Using Proposition \ref{L2} and  the standard parabolic theory, we deduce that there exists an  unique weak solution lying in $L^2(0,T;H^1(\Omega_L))$ for our problem \eqref{1Lhomeq1}--\eqref{1Lic} (for details we refer the reader to Chapter 4 of \cite{ladyzhenskaia1988linear}). Hence, we conclude that the map $Q$ is well-defined. 
\par Now, we prove that $T$ maps a bounded set to itself. We do so by using energy estimates. We define the weak formulation of \eqref{1Lhomeq1}--\eqref{1Lic} as
\begin{equation}\label{L3e1}
    \int_{\Omega_L} \partial_t p \phi \, dx+ \int_{\Omega_L}\nabla \phi\cdot D^*(v,W) \nabla p  \, dx=  \int_{\Omega_L} \Tilde{f}\phi \, dx
\end{equation}
for all $\phi \in H_0^1(\Omega_L)$. Choosing in \eqref{L3e1} $\phi=p$ , we have 
\begin{equation}\label{L3e2}
    \int_{\Omega_L} \partial_t p p \, dx+ \int_{\Omega_L} \nabla p \cdot D^*(v,W) \nabla p \, dx=  \int_{\Omega_L} \Tilde{f}p \, dx.
\end{equation}
By Lemma \ref{L2} combined with Young's inequality applied to the right-hand side of \eqref{L3e2}, we get
\begin{equation}\label{L3e4}
   \frac{1}{2}\frac{d}{dt} \| \partial_t p \|_{L^2(\Omega_L)}^2+ \alpha\int_{\Omega_L} | \nabla p |^2 \, dx\leq\frac{1}{2} \| \Tilde{f}\|_{L^2(\Omega_L)}^2+\frac{1}{2} \|p\|_{L^2(\Omega_L)}^2,
\end{equation}
and hence,
\begin{equation}\label{L3e5}
   \frac{1}{2}\frac{d}{dt} \| \partial_t p \|_{L^2(\Omega_L)}^2\leq\frac{1}{2} \| \Tilde{f}\|_{L^2(\Omega_L)}^2+\frac{1}{2} \|p\|_{L^2(\Omega_L)}^2.
\end{equation}
 Gronwall's inequality applied to \eqref{L3e5}  guarantees the upper bound
\begin{equation}\label{L3e6}
   \|p\|_{L^{\infty}(0,T;L^2(\Omega_L)}\leq C_1,
\end{equation}
where $C_1:=e^T(\|g\|_{L^2(\Omega_L)}+T\|\Tilde{f}\|_{L^2(\Omega_L)})$ is a positive constant depending on $\Tilde{f}$ and $g$.
Now, integrating \eqref{L3e4} from $0$ to $T$ and using \ref{A3}, \ref{A4} and \eqref{L3e6} we ensure that there exist $C_2>0$ such that

\begin{equation}\label{L3e7}
   \|\nabla p\|_{L^{2}(0,T;L^2(\Omega_L)}\leq C_2,
\end{equation}
where $C_2$ is also depending on $\Tilde{f}$ and $g$.\\ Now, if we take $v\in L^{\infty}((0,T)\times\Omega_L)$ with 
\begin{equation}\label{L3e8}
   \| v\|_{L^{\infty}((0,T)\times(\Omega_L)}\leq \|g\|_{L^{\infty}(\Omega_L) }+T\left(\|f\|_{L^{\infty}(0,T;L^2_{\#}(Z)) }+\|g_N\|_{L^{\infty}(0,T;L^2_{\#}(\Gamma_N)) }\right),
\end{equation}
then by means of methods similar to that ones we used in the proof of Proposition \ref{AT1} in Appendix, we have
\begin{equation}\label{L3e9}
   \| p\|_{L^{\infty}((0,T)\times(\Omega_L)}\leq \|g\|_{L^{\infty}(\Omega_L) }+T\left(\|f\|_{L^{\infty}(0,T;L^2_{\#}(Z)) }+\|g_N\|_{L^{\infty}(0,T;L^2_{\#}(\Gamma_N)) }\right).
\end{equation}

\par We define a new set $S\subset{L^{2}(0,T;L^2(\Omega_L)}$ such that
\begin{equation*}\label{}
   S:=\{u\in {L^{2}(0,T;L^2(\Omega_L)}: \|u\|_{L^{2}(0,T;L^2(\Omega_L)}\leq C_1, \|u\|_{L^{\infty}((0,T)\times(\Omega_L)}\leq M\},
\end{equation*}
where $M:=\|g\|_{L^{\infty}(\Omega_L) }+T\left(\|f\|_{L^{\infty}(0,T;L^2_{\#}(Z)) }+\|g_N\|_{L^{\infty}(0,T;L^2_{\#}(\Gamma_N)) }\right)$.
 From the definition of the map $Q$ together with \eqref{L3e7} and \eqref{L3e9} we note that $Q$ maps the bounded set $S$ into itself. It remains to show that  $S$ is a compact subset of $L^{2}(0,T;L^2(\Omega_L))$. To do so, we prove firstly the following claim:
\newline
 For $v\in S $ there exist constants $C_3,C_4>0$ such that 
\begin{equation}\label{L3e11}
  \| [D^*(v,W)]_{i,j}\|_{L^\infty ((0,T)\times \Omega_L)}\leq C_3+C_4\|P'(v)\|_{L^\infty ((0,T))}.
\end{equation}
Indeed, since $w_i\in H_{\#}^1(Z)$ is a weak solution to the problem \eqref{1Lcell3}--\eqref{1Lcellp},  using \ref{A1}, \ref{A2} together with the Cauchy-Schwarz inequality, and \eqref{bstar},  we obtain
 \begin{align}
    \bigg| \frac{1}{|Z|}\int_Z D(y)e_j\cdot e_i\, dy\bigg|&\leq C, \label{L3e121}\\
    \bigg| \frac{1}{|Z|}\int_Z D(y)\nabla_y w_j\cdot e_i\, dy\bigg|&\leq C, \label{L3e122}\\
   \bigg|  \frac{1}{|Z|}B^*\cdot \int_Z w_j e_i\, dy\bigg|&\leq C,\label{L3e123}\\
   \bigg| \frac{P'(v)}{|Z|}\int_Zw_jB(y)\cdot e_i\, dy\bigg|&\leq C\|P'(v)\|_{L^\infty ((0,T)\times \Omega_L)}\label{L3e124}.
 \end{align}
Combining  \eqref{L3e121}--\eqref{L3e124} with  \eqref{L2e1}, we obtain \eqref{L3e11}
for some positive constants $C_3$ and $C_4$ that can be computed explicitly.

\par   Let $v\in S$, In the weak formulation \eqref{L3e1}.  We choose the test function 
$\phi \in L^2(0,T;H_0^1(\Omega_L))$ such that $\|\phi\|_{H^1(\Omega_L)}\leq 1 $, We get  
\begin{equation}\label{L3e14}
    \int_{\Omega_L} \partial_t p \phi \, dx+ \int_{\Omega_L} \nabla \phi\cdot D^*(v,W) \nabla p  \, dx=  \int_{\Omega_L} \Tilde{f}\phi \, dx.
\end{equation}
Integrating \eqref{L3e14} from $0$ to $T$, we obtain

\begin{equation}\label{L3e15}
\begin{aligned}
      \| \partial_t p \|_{L^2(0,T;[H^1(\Omega_L)]^*)}\leq \sup_{\|\phi\|_{H^1(\Omega_L)}\leq 1} \int_0^T \int_{\Omega_L}-  \nabla \phi\cdot D^*(v,W) \nabla p \, dx\\
    + \sup_{\|\phi\|_{H^1(\Omega_L)}\leq 1}\int_0^T\int_{\Omega_L} \Tilde{f}\phi \, dx.
    \end{aligned}
\end{equation}
Since $v\in S$, using the Cauchy-Schwarz inequality, \eqref{L3e7} and \eqref{L3e11}, we have
\begin{equation}\label{L3e16}
  \sup_{\|\phi\|_{H^1(\Omega_L)}\leq 1} \int_0^T \int_{\Omega_L}- \nabla \phi\cdot D^*(v,W) \nabla p  \, dxdt\leq C.
\end{equation}
By \ref{A3}, \ref{A4} and the Cauchy-Schwarz inequality, we get as well
\begin{equation}\label{L3e17}
  \sup_{\|\phi\|_{H^1(\Omega_L)}\leq 1}\int_0^T\int_{\Omega_L} \Tilde{f}\phi \, dx\leq C.
\end{equation}
From \eqref{L3e15}--\eqref{L3e17}, we finally obtain 
\begin{equation}\label{L3e18}
   \| \partial_t p \|_{L^2(0,T;[H^1(\Omega_L)]^*)}\leq C.
\end{equation}
 Hence we proved that for $v\in S  $ it holds $Q(v)=p\in H^1(0,T;[H^1(\Omega_L)]^*) $. As a direct application of Lions-Aubin's compactness lemma (see \cite{aubin1963analyse}), the space $H^1(0,T;[H^1(\Omega_L)]^*)$ is compactly embedded in  $L^{2}(0,T;L^2(\Omega_L)$. This implies that our set $S$ is a compact subset of $L^{2}(0,T;L^2(\Omega_L)$.
\par  In order to apply the Schauder fixed-point theorem, we still need to prove that $Q$ is continuous on $S$. We guarantee the  continuity property of $Q$ by a sequential argument.
\par Let $v_n\in S$ such that $v_n\rightarrow v$ in $S$ as $n\rightarrow \infty$. We denote $p^n=Q(v_n)$. We prove $p^n\rightarrow p$ as $n\rightarrow \infty$, i.e. we show $ Q(v_n)\rightarrow Q(v)$ as $n\rightarrow \infty$.
\par Using Lions-Aubin's compactness lemma (see \cite{aubin1963analyse}), Banach-Alaglou theorem (see \cite{rudin1973functional}), \eqref{L3e6}, \eqref{L3e7}, \eqref{L3e9} and \eqref{L3e18}, we have
\begin{align}
    v_n&\rightarrow v_0 \hspace{1cm} &\mbox{a.e.}\hspace{1cm} &(0,T)\times \Omega_L \label{c1}\\
    p^n&\rightarrow p\hspace{1cm} &\mbox{in}\hspace{1cm} &L^{2}(0,T;L^2(\Omega_L)\label{c2}\\
    p^n&\rightharpoonup p\hspace{1cm} &\mbox{in}\hspace{1cm} &L^{2}(0,T;H^1(\Omega_L)\label{c3}\\
     \frac{\partial p^n}{\partial x_i} &\rightharpoonup  \frac{\partial p}{\partial x_i}\hspace{1cm} &\mbox{in}\hspace{1cm} &L^{2}(0,T;L^2(\Omega_L)\label{c4}\\
     \frac{\partial p^n}{\partial t} &\rightharpoonup  \frac{\partial p}{\partial t}\hspace{1cm} &\mbox{in}\hspace{1cm} &L^2(0,T;[H^1(\Omega_L)]^*)\label{c5}.
\end{align}
Using \eqref{c5} for $\phi \in H_0^1(\Omega_L)$, we have 

\begin{equation}\label{L3e19}
   \int_{\Omega_L} \partial_t p^n \phi \, dx\rightarrow  \int_{\Omega_L} \partial_t p \phi \, dx
\end{equation}
as $n\rightarrow \infty$.
Since $v_n\rightarrow v$ strongly in $S$ as $n\rightarrow \infty$, we also have
\begin{equation}\label{L3e20}
 \frac{1}{|Z|}  \int_0^T\int_{\Omega_L} P'(v_n)  dxdt\rightarrow \frac{1}{|Z|} \int_0^T \int_{\Omega_L} P'(v) \, dxdt.
\end{equation}

By  \eqref{effectdiff},  \eqref{bstar}, \eqref{c2} and  \eqref{c4}, we obtain

\begin{equation}\label{L3e21}
   \int_{\Omega_L}\nabla \phi\cdot D^*(v_n,W) \nabla p^n  \, dx\rightarrow \int_{\Omega_L}\nabla \phi\cdot D^*(v,W) \nabla p  \, dx,
\end{equation}
as $n\rightarrow\infty$.
Now, applying \eqref{c1}--\eqref{L3e21}, and using the definition of the map $Q$, we conclude that $Q(v_n)\rightarrow Q(v)$ as $n\rightarrow \infty$. Hence $Q$ is sequentially continuous  on $S$.

\par Summarizing, we proved that $S\subset L^{2}(0,T;L^2(\Omega_L)$  is convex, compact , closed and the map $Q:L^2(0,T;L^2(\Omega_L)\rightarrow L^2(0,T;L^2(\Omega_L)$ is continuous on $S$ with $Q(S)\subseteq S$. Schauder's fixed point theorem guarantees that  $Q$ has a fixed point in $S$. This completes the proof of the weak solvability of $P(\Omega_L)$.

\end{proof}
{\subsection{Passage to the limit \texorpdfstring{$L\rightarrow \infty$}{}. Weak solvability of \texorpdfstring{$P(\Omega)$}.{}}

In this section, we prove the existence of weak solutions to  $P(\Omega)$, which is precisely the  upscaled problem derived in the section \ref{upscaled}. We first prove a positivity property as well as a comparison principle for the weak solutions associated to the approximating $P(\Omega_L)$. Relying on these auxilary results, we obtain that the extended solution to $P(\Omega_L)$ converges in a suitable sense to the solution to  $P(\Omega)$  as $L\rightarrow\infty$. Note that here we are approximating the solution of a problem posed in unbounded domain via a monotonically convergent sequence of extended solutions to a problem posed in a bounded domain.

\begin{lemma}[A positivity result]\label{L3}
Let $u_L$ be the weak solution to $P(\Omega)$ in the sense of Definition \ref{D1}. Assume \ref{A1}--\ref{Af} hold true. Then 
\begin{equation*}
    u_L\geq 0\hspace{0.5cm} \mbox{a.e. on} \hspace{0.5cm} (0,T)\times\Omega_L .
\end{equation*}
\end{lemma}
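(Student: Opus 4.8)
The plan is to run a standard energy estimate on the negative part of the solution, exploiting the uniform positive definiteness of $D^*$ established in Proposition \ref{L2} together with the sign conditions \ref{A5} and \ref{Af}. (Here the weak solution is understood in the sense of Definition \ref{D1} for the bounded-domain problem $P(\Omega_L)$.) Write $u_L^-:=\max\{0,-u_L\}$ for the negative part. Since $u_L=0$ on $\partial\Omega_L$, one has $u_L^-\in L^2(0,T;H_0^1(\Omega_L))$, so it is an admissible test function in the weak formulation \eqref{wfbd1}.

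First I would insert $\phi=u_L^-$ into \eqref{wfbd1}. Using the chain-rule identity $\langle\partial_t u_L,u_L^-\rangle=-\tfrac12\frac{d}{dt}\|u_L^-\|_{L^2(\Omega_L)}^2$ for the time term and the a.e. identity $\nabla u_L^-=-\nabla u_L\,\mathbbm{1}_{\{u_L<0\}}$ for the elliptic term, I obtain
\begin{equation*}
\frac12\frac{d}{dt}\|u_L^-\|_{L^2(\Omega_L)}^2+\int_{\Omega_L}\nabla u_L^-\cdot D^*(u_L,W)\nabla u_L^-\,dx=-\int_{\Omega_L}\tilde f\, u_L^-\,dx.
\end{equation*}
By Proposition \ref{L2} the diffusion term is bounded below by $\alpha\|\nabla u_L^-\|_{L^2(\Omega_L)}^2\geq 0$. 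The crucial observation is that assumption \ref{Af} forces $\tilde f\geq 0$ pointwise, so that $-\int_{\Omega_L}\tilde f\, u_L^-\,dx\leq 0$ because $u_L^-\geq 0$. Hence
\begin{equation*}
\frac12\frac{d}{dt}\|u_L^-\|_{L^2(\Omega_L)}^2\leq-\alpha\|\nabla u_L^-\|_{L^2(\Omega_L)}^2\leq 0.
\end{equation*}

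Thus $t\mapsto\|u_L^-(t)\|_{L^2(\Omega_L)}^2$ is non-increasing. By \ref{A5} the initial datum $g$ is nonnegative, so $u_L^-(0)=0$, and the monotonicity above forces $\|u_L^-(t)\|_{L^2(\Omega_L)}=0$ for a.e. $t\in(0,T)$, i.e. $u_L\geq 0$ a.e. on $(0,T)\times\Omega_L$. The main technical obstacle is the rigorous justification of the time-derivative identity: since Definition \ref{D1} only guarantees $\partial_t u_0\in L^2(0,T;[H^1(\Omega_L)]^*)$, the pairing $\langle\partial_t u_L,u_L^-\rangle$ must be read through the duality and the identity recovered either by a density and Steklov-averaging argument or by invoking the standard lemma on $\frac{d}{dt}\|u^-\|_{L^2}^2$ valid for functions whose gradient lies in $L^2(0,T;L^2)$ and whose time derivative lies in $L^2(0,T;[H^1]^*)$. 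Everything else is a direct combination of the coercivity furnished by Proposition \ref{L2} with the sign conditions \ref{A5} and \ref{Af}.
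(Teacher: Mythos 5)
Your proposal is correct and follows essentially the same route as the paper: test the weak formulation \eqref{wfbd1} with $\phi=u_L^-$, use the positive definiteness of $D^*$ from Proposition \ref{L2} to discard the elliptic term, use \ref{Af} to control the sign of the source term and \ref{A5} to get $u_L^-(0)=0$, and conclude via the resulting differential inequality. Your explicit remark on justifying $\langle\partial_t u_L,u_L^-\rangle=-\tfrac12\frac{d}{dt}\|u_L^-\|_{L^2}^2$ through the duality pairing is a point the paper passes over silently, but it does not change the argument.
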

\begin{proof}

We define 
\begin{align}
    u_L^-:&=max\{-u_L,0\}\label{neg}\\
    u_L^+:&=max\{u_L,0\}\label{pos}.
\end{align}
Substituting  $u_L=u_L^+-u_L^-$ in \eqref{wfbd1} and choosing as the test function  $\phi=u_L^-$ we get 
 \begin{multline}\label{ube1}
    \int_{\Omega_L} \partial_t( u_L^+-u_L^-) u_L^- \, dx+ \int_{\Omega_L} \nabla u_L^-\cdot D^*(( u_L^+-u_L^-),W) \nabla ( u_L^+-u_L^-)  \, dx\\
    =  \int_{\Omega_L}\left( \frac{1}{|Z|}\int_Z f\, dy + \frac{-1}{|Z|}\int_{\Gamma_N}g_Nd\sigma_y\right)u_L^- \, dx.
\end{multline}
By \ref{A1}--\ref{A5}, \eqref{neg}, \eqref{pos} on \eqref{ube1}, it yields
\begin{equation}\label{ube2}
    -\frac{1}{2}\frac{d}{dt}\int_{\Omega_L}(u_L^-)^2\, dx-\int_{\Omega_L}\nabla u_L^-\cdot D^*(( u_L^+-u_L^-),W) \nabla (u_L^-)  \, dx\geq 0.
\end{equation}
Recalling Proposition \ref{L2}, we obtain

\begin{equation}\label{ube3}
    \frac{1}{2}\frac{d}{dt}\int_{\Omega_L}(u_L^-)^2\, dx\leq 0.
\end{equation}
Integrating \eqref{ube3} from 0 to $T$ and using assumption \ref{A5}, we conclude 
\begin{equation*}\label{}
    \|u_L^-\|_{L^2((0,T)\times \Omega_L)}^2\leq 0,
\end{equation*}
which yields, $u_L\geq 0$ a.e. on $(0,T)\times \Omega_L$.
\end{proof}
\begin{lemma}[A comparison principle]\label{L4}
Let $v_1,v_2 \in L^2((0,T);H^2(\Omega_L))$ satisfying  \eqref{Lhomeq1} with $0\leq v_1,v_2\leq M$, where M is a positive constant. Further more assume that the inequalities 
\begin{align*}
    v_1(0,x)&\leq v_2(0,x) &\mbox{for}\,\,   &x\in \Omega_L\\
    v_1&\leq v_2 &\mbox{on}\,  &(0,T)\times\partial \Omega_L
\end{align*}
hold. Then
\begin{equation*}
    v_1\leq v_2\hspace{.5cm}\mbox{on}\,  (0,T)\times\overline{\Omega}_L.
\end{equation*}
\end{lemma}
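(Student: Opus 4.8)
The plan is to run the classical parabolic comparison argument on the difference $w := v_1 - v_2$, testing the equation it solves against its positive part $w^+ := \max\{w,0\}$, and to exploit the special quadratic structure $P(r)=r(1-r)$ of \eqref{p} together with the $H^2$-regularity of the $v_i$ to close a Gronwall estimate. First I would subtract the two copies of \eqref{Lhomeq1} (one for $v_1$, one for $v_2$) to obtain, in the weak sense, $\partial_t w - \mathrm{div}\big(D^*(v_1,W)\nabla v_1 - D^*(v_2,W)\nabla v_2\big)=0$, and split the flux as $D^*(v_1,W)\nabla w + \big(D^*(v_1,W)-D^*(v_2,W)\big)\nabla v_2$. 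Testing with $w^+$ is admissible, since $v_1\le v_2$ on the parabolic boundary forces $w^+=0$ on $(0,T)\times\partial\Omega_L$ and $w^+(0)=0$; using $\nabla w^+ = \nabla w\,\mathbbm{1}_{\{w>0\}}$ this yields
\[
\tfrac12\tfrac{d}{dt}\int_{\Omega_L}(w^+)^2\,dx + \int_{\Omega_L}\nabla w^+\cdot D^*(v_1,W)\nabla w^+\,dx = -\int_{\Omega_L}\nabla w^+\cdot\big(D^*(v_1,W)-D^*(v_2,W)\big)\nabla v_2\,dx.
\]
The second term on the left is bounded below by $\alpha\|\nabla w^+\|_{L^2(\Omega_L)}^2$ thanks to the uniform positive definiteness established in Proposition \ref{L2}.

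The heart of the matter is the right-hand side. Because $P(r)=r(1-r)$ gives $P'(v_1)-P'(v_2)=-2w$, and because the only $x$-dependent contribution of the solution to $D^*(\cdot,W)$ enters through the factor $P'(u_0)$ in the last term of \eqref{effectdiff}, the coefficient difference is pointwise of the form $D^*(v_1,W)-D^*(v_2,W)=w\,G$, with $G:=\tfrac{2}{|Z|}\int_Z B(y)W(y)^t\,dy$ a matrix that is constant in $x$. Hence on $\{w>0\}$ the right-hand side equals $-\int_{\Omega_L}w^+\nabla w^+\cdot G\nabla v_2\,dx = -\tfrac12\int_{\Omega_L}\nabla\big((w^+)^2\big)\cdot G\nabla v_2\,dx$, and an integration by parts (the boundary term vanishing because $w^+=0$ on $\partial\Omega_L$) moves the derivative onto $v_2$, producing $\tfrac12\int_{\Omega_L}(w^+)^2\,G:\nabla^2 v_2\,dx$. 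This is precisely where the hypothesis $v_i\in L^2(0,T;H^2(\Omega_L))$ is consumed: $\nabla^2 v_2\in L^2(\Omega_L)$ for a.e.\ $t$.

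It then remains to absorb this term. Using the two-dimensional Ladyzhenskaya inequality $\|w^+\|_{L^4}^2\le C\|w^+\|_{L^2}\|w^+\|_{H^1}$, the boundedness $0\le w^+\le M$, and Young's inequality, I would estimate
\[
\Big|\tfrac12\int_{\Omega_L}(w^+)^2\,G:\nabla^2 v_2\,dx\Big|\le \tfrac{\alpha}{2}\,\|\nabla w^+\|_{L^2(\Omega_L)}^2 + C\big(1+\|\nabla^2 v_2(t)\|_{L^2(\Omega_L)}^2\big)\|w^+\|_{L^2(\Omega_L)}^2,
\]
absorb the gradient term into the coercive contribution, and arrive at $\tfrac{d}{dt}\|w^+\|_{L^2(\Omega_L)}^2\le C\big(1+\|\nabla^2 v_2(t)\|_{L^2(\Omega_L)}^2\big)\|w^+\|_{L^2(\Omega_L)}^2$. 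Since $t\mapsto 1+\|\nabla^2 v_2(t)\|_{L^2(\Omega_L)}^2$ is integrable on $(0,T)$ (as $v_2\in L^2(0,T;H^2)$) and $w^+(0)=0$, Gronwall's lemma forces $w^+\equiv 0$, that is, $v_1\le v_2$ a.e.\ on $(0,T)\times\overline{\Omega}_L$.

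I expect the main obstacle to be exactly the control of the coefficient-difference term. A naive application of Young's inequality to $\int_{\Omega_L}\nabla w^+\cdot(D^*(v_1,W)-D^*(v_2,W))\nabla v_2\,dx$ leaves a term $\int_{\Omega_L}(w^+)^2|\nabla v_2|^2\,dx$, which cannot be closed directly because $\nabla v_2$ is not bounded in two dimensions (in $2$D one only has $H^2\hookrightarrow W^{1,p}$ for finite $p$, not $W^{1,\infty}$). The decisive move is therefore the integration-by-parts trick above, which replaces the non-integrable-in-time quantity $|\nabla v_2|^2$ by $\nabla^2 v_2\in L^2(0,T;L^2)$; paired with the Ladyzhenskaya interpolation this is what renders the Gronwall coefficient merely $L^1$ in time and lets the argument conclude.
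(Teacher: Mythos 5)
Your argument is essentially correct, but it follows a genuinely different route from the paper. The paper first applies a Kirchhoff transformation $\Theta(u):=\int_0^u D^*(\tau,W)\,d\tau$ to move the nonlinearity from the flux into the time derivative, obtaining $\partial_t\beta(\Theta)-\Delta\Theta=\tilde f$ with $\beta=\Theta^{-1}$, and then runs the Hilhorst-type duality/monotonicity argument: an auxiliary elliptic problem $\Delta G=(\beta(\Theta_2)-\beta(\Theta_1))\chi_{\Theta_1\geq\Theta_2}$ is introduced, tested against the difference of the two transformed equations, and the strict monotonicity of $\beta$ forces $(\Theta_2-\Theta_1)^-=0$. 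That route needs no second derivatives of the solutions and only monotonicity of $D^*(\cdot,W)$, but it implicitly requires $D^*$ to act like a scalar monotone function of $u$ so that $\mathrm{div}(D^*(u,W)\nabla u)=\Delta\Theta(u)$ makes sense. Your direct energy method ($w^+$-testing, flux splitting, the integration-by-parts trick that trades $|\nabla v_2|^2$ for $G:\nabla^2 v_2$, Ladyzhenskaya interpolation, Gronwall) is more elementary and works for genuinely matrix-valued $D^*$, but it consumes the $H^2$ hypothesis in an essential way and leans on the quadratic form of $P$ so that the coefficient difference is exactly linear in $w$; you correctly diagnose that a naive Young estimate would fail in 2D, and your fix is the right one. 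One point you should make explicit: in \eqref{effectdiff} the solution also enters $D^*$ through $B^*(u_0)$, which depends on $u_0$ via the global average $\mathfrak{A}(u_0)$; your identity $D^*(v_1,W)-D^*(v_2,W)=wG$ silently assumes $B^*(v_1)=B^*(v_2)$. This is harmless here (for $v_i\in L^2$ one checks $\mathfrak{A}(v_i)=1$, and the paper's own Kirchhoff transform makes the same tacit simplification), but if $\mathfrak{A}(v_1)\neq\mathfrak{A}(v_2)$ the residual term $-\int_{\Omega_L}\nabla w^+\cdot K\nabla v_2\,dx$ with $K$ constant is not proportional to $w$ and your Gronwall loop would not close.
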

\begin{proof}
To prove Lemma \ref{L4} it is convenient to use a technique of Kirchhoff's transformation (see \cite{bagnall2013application} and \cite{regispredictive}) which help us to transform nonlinearity from diffusion term to time-derivative term. We define Kirchhoff's transformation $\Theta$ as
\begin{equation}\label{l4e1}
    \Theta(u):=\int_0^u D^*(\tau, W)d\tau,
\end{equation}
where $D^*(\cdot, W)$ is defined as in \eqref{effectdiff}. 
From the structure of $D^*$, we get that $D^*(\cdot, W)$ is strictly monotonic and 
\begin{equation}\label{l4e2}
    \mathrm{div}(D^*(u,W)\nabla u)= \Delta \Theta.
\end{equation}
Since $D^*(\cdot, W)$ is strictly monotone (without loss of generality we can assume that $D^*(\cdot, W)$ is increasing) and using \eqref{l4e1}, we see that $\Theta$ is invertible, we denote the inverse of $\Theta$ by $\beta$. Using \eqref{l4e1} and \eqref{l4e2} on \eqref{Lhomeq1}--\eqref{bdic}, we obtain
\begin{align}
    \partial_t \beta (\Theta) -\Delta \Theta &= \frac{1}{|Z|}\int_Z f\, dy + \frac{-1}{|Z|}\int_{\Gamma_N}g_N \, d\sigma_y &\mbox{on} \hspace{.4cm}  (0,T)\times \Omega_L\label{l4e3}\\
    \Theta &=0&\mbox{on} \hspace{.4cm}  (0,T)\times \partial\Omega_L\label{l4e4}\\
    \Theta(0,x)&=g^* &\mbox{on} \hspace{.4cm}  x\in \Omega_L \label{l4e5}
\end{align}
where $g^*:=\Theta(g)$. Now, assume that $\Theta_1,\Theta_2\in L^2((0,T);H^2(\Omega_L))$  satisfy the identity \eqref{l4e3} with 
\begin{equation}\label{l4e6}
    \Theta_1\leq \Theta_2\,\, \mbox{on} \hspace{.4cm}  (0,T)\times \partial\Omega_L.
\end{equation} As a consequence of \eqref{l4e6}, we get 
\begin{equation}\label{neg2}
    (\Theta_2-\Theta_1)^{-}=0\,\,\mbox{on} \hspace{.4cm}  (0,T)\times \partial\Omega_L.
\end{equation}
Consider the identity \eqref{l4e3} for both $\Theta_1$ and $\Theta_2$. Subtract each other, multiply by $\phi\in H_0^1(\Omega_L)$ and integrate over $\Omega_L$ . Later performing integration by parts on the second term , we obtain
\begin{equation}\label{l4e7}
    \int_{\Omega_L} \partial_t(\beta(\Theta_2)-\beta(\Theta_1))\phi dx+\int_{\Omega_L} \nabla(\Theta_2-\Theta_1)\cdot \nabla \phi\, dx=0.
\end{equation} It is convenient to introduce the following  auxiliary problem:
\par Find $G$ satisfying 
\begin{align}
  \Delta G&= (\beta (\Theta_2)-\beta(\Theta_1)) \chi_{\Theta_1\geq \Theta_2}\, &\mbox{on}& \hspace{.4cm}  \Omega_L\label{l4e8}\\
    G&=0\,\, &\mbox{on}& \hspace{.4cm}   \partial\Omega_L\\
    \nabla G(0)&=0 \,\,&\mbox{on}& \hspace{.4cm} \Omega_L\label{l4e10}.
\end{align}
We define the weak form of the problem \eqref{l4e8}--\eqref{l4e10} as
\begin{equation}\label{l4e11}
    \int_{\Omega_L}\nabla G\cdot\nabla \psi \, dx= \int_{\Omega_L} (\beta (\Theta_2)-\beta(\Theta_1)) \chi_{\Theta_1\geq \Theta_2}\,\psi \, dx
\end{equation}
for all $\psi\in H_0^1(\Omega_L).$ For the existence of the weak solution to \eqref{l4e8}--\eqref{l4e10} we refer to \cite{evans2010partial}. Now, we differentiate \eqref{l4e8} with respect to time and consider the associated weak formulation
\begin{equation}\label{l4e12}
    \int_{\Omega_L}\nabla \partial_t G\cdot\nabla \psi \, dx= \int_{\Omega_L}\partial_t( (\beta (\Theta_2)-\beta(\Theta_1)) \chi_{\Theta_1\geq \Theta_2})\,\psi \, dx
\end{equation} for all $\psi\in H_0^1(\Omega_L).$ We substitute $\psi=G$ in  \eqref{l4e12}. Integrating the result with respect to $t$ and using  \eqref{l4e10}, we get
\begin{equation*}\label{}
    \frac{1}{2}\int_{\Omega_L}|\nabla G|^2\,dx=\int_0^t\int_{\Omega_L}\partial_t( (\beta (\Theta_2)-\beta(\Theta_1)) \chi_{\Theta_1\geq \Theta_2})\,G \, dx\,dt.
\end{equation*}
Hence
\begin{equation}\label{l4e14}
    \int_0^t\int_{\Omega_L}\partial_t( (\beta (\Theta_2)-\beta(\Theta_1)) \chi_{\Theta_1\geq \Theta_2})\,G\, dx\,dt\geq 0.
\end{equation}
Choosing $\phi=G$ in \eqref{l4e7} leads to
\begin{equation}\label{l4e15}
    \int_{\Omega_L} \partial_t(\beta(\Theta_2)-\beta(\Theta_1))G dx+\int_{\Omega_L} \nabla(\Theta_2-\Theta_1)\cdot\nabla G\, dx=0.
\end{equation}
Multiply \eqref{l4e8} by $(\Theta_2-\Theta_1)$. After integration by parts and employing \eqref{neg2}, we get
\begin{equation}\label{l4e16}
    \int_{\Omega_L}\nabla G\cdot\nabla(\Theta_2-\Theta_1) \, dx-\int_{\partial \Omega_L}\nabla G\cdot n (\Theta_2-\Theta_1)^+d\sigma_x= \int_{\Omega_L} (\beta (\Theta_2)-\beta(\Theta_1)) \chi_{\Theta_1\geq \Theta_2}\,(\Theta_2-\Theta_1) \, dx.
\end{equation}
Combining \eqref{l4e15} and \eqref{l4e16}, we see that
\begin{equation}\label{l4e17}
    \int_{\Omega_L} \partial_t(\beta(\Theta_2)-\beta(\Theta_1))G dx+\int_{\partial \Omega_L}\nabla G\cdot n (\Theta_2-\Theta_1)^+d\sigma_x+ \int_{\Omega_L} (\beta (\Theta_2)-\beta(\Theta_1)) \chi_{\Theta_1\geq \Theta_2}\,(\Theta_2-\Theta_1) \, dx=0.
\end{equation}
Rearranging suitably \eqref{l4e17}, we have
\begin{multline}\label{l4e18}
    \int_{\Omega_L} \partial_t(\beta(\Theta_2)-\beta(\Theta_1))G(\chi_{\Theta_2\geq \Theta_1}+\chi_{\Theta_2< \Theta_1}) dx+\int_{\partial \Omega_L}\nabla G\cdot n (\Theta_2-\Theta_1)^+\\
    + \int_{\Omega_L} (\beta (\Theta_2)-\beta(\Theta_1)) \chi_{\Theta_1\geq \Theta_2}\,(\Theta_2-\Theta_1) \, dx=0.
\end{multline}
Let $\Omega_L^+$ be the subset of $\Omega_L$  where $\Theta_2<\Theta_1$. Finally based on \eqref{l4e18}, we have 
\begin{equation}\label{l4e19}
    \int_{\Omega_L^+} \partial_t(\beta(\Theta_2)-\beta(\Theta_1))Gdx+ \int_{\Omega_L} (\beta (\Theta_2)-\beta(\Theta_1)) \chi_{\Theta_1\geq \Theta_2}\,(\Theta_2-\Theta_1) \, dx=0.
\end{equation}
We defined $\beta$ as the inverse of strictly increasing function $\Theta$. Consequently, $\beta$ is strictly increasing as well. Integrating \eqref{l4e19} from $0$ to $t$ and using \eqref{l4e14} together with the monotonicity increasing of $\beta$, we obtain 
\begin{equation*}
   \int_0^T \int_{\Omega_L} (\beta (\Theta_2)-\beta(\Theta_1)) (\Theta_2-\Theta_1)^{-} \, dx= 0.
\end{equation*}
This leads to
\begin{equation*}
    (\Theta_2-\Theta_1)^{-}=0 \, \,\,\mbox{a.e} \,\,\,(0,T)\times \Omega_L,
\end{equation*}
and hence,
\begin{equation*}
    \Theta_2\geq \Theta_1 \, \,\,\mbox{a.e} \,\,\,(0,T)\times \Omega_L.
\end{equation*}
If $\Theta_1:=\Theta(v_1)$ and $\Theta_2:=\Theta(v_2)$, then  $\Theta_1,\Theta_2\in L^2((0,T);H^2(\Omega_L))$ are satisfying  \eqref{l4e3} such that 
\begin{equation*}\label{}
    \Theta_1\leq \Theta_2\,\, \mbox{on} \hspace{.4cm}  (0,T)\times \partial\Omega_L.
\end{equation*}
Then
\begin{equation*}
    \Theta(v_2)\geq \Theta(v_1) \, \,\,\mbox{a.e.} \,\,\,(0,T)\times \Omega_L.
\end{equation*}
Since $\beta$ is increasing, it holds
\begin{equation*}
   \beta( \Theta(v_2))\geq \beta(\Theta(v_1)) \, \,\,\mbox{a.e} \,\,\,(0,T)\times \Omega_L.
\end{equation*} We conclude the proof with
\begin{equation*}
   v_2\geq v_1 \, \,\,\mbox{a.e} \,\,\,(0,T)\times \Omega_L.
\end{equation*}
\end{proof}

\par We define the weak solution of the Problem $P(\Omega)$ as follows:
\begin{definition}\label{D2}
    The pair $(u_0,W)\in L^2(0,T;H^1(\mathbb{R}^2))\times [H_\#^1(Z)]^2$ is called weak solution to  $P(\Omega)$ if and only if the following identities are satisfied 
    \begin{align*}
    \int_{\mathbb{R}^2} \partial_t u_0 \phi \, dx+ \int_{\mathbb{R}^2}\nabla \phi\cdot D^*(u_0,W) \nabla u_0  \, dx&=  \int_{\mathbb{R}^2}\left( \frac{1}{|Z|}\int_Z f\, dy + \frac{-1}{|Z|}\int_{\Gamma_N}g_Nd\sigma_y\right)\phi \, dx\label{}\\
    \int_{Z}\nabla_y \psi\cdot D(y)\nabla_y w_i dy+  \mathfrak{A}(u_0)\int_{Z}B(y)\cdot w_i\nabla_y \psi\,  dy&=\int_Z \left(B^*\cdot e_i-B(y)\cdot  \mathfrak{A}(u_0) e_i\right)\psi \, dy\label{}
\end{align*}
for all $(\phi,\psi)\in C_c^{\infty}(\mathbb{R}^2)\times H^1_\#(Z)$ and $a.e.$ $t\in (0,T),$ together with the initial condition
\begin{equation*}\label{}
    u_0(0)=g \hspace{.5cm} \mbox{on} \hspace{.5cm} \mathbb{R}^2.
\end{equation*}
\end{definition}

\begin{theorem}
Assume \ref{A1}--\ref{Af} holds. Then its exists a solution $(u_0,W)$ to $P(\Omega)$ in the sense of Definition \ref{D2}.
\end{theorem}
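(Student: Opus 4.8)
The plan is to build the solution on $\mathbb{R}^2$ as the monotone limit of the bounded-domain solutions furnished by Theorem \ref{T1}. Fix an increasing sequence of radii $L_1<L_2<\cdots\to\infty$ and take $\Omega_L=B_L(0)$. For each $L$, Theorem \ref{T1} yields a weak solution $(u_L,W)$ of $P(\Omega_L)$, and Lemma \ref{L3} gives $u_L\geq 0$ a.e. I would extend each $u_L$ by zero to all of $\mathbb{R}^2$, keeping the notation $u_L$. The crucial observation is that the weak formulation of Definition \ref{D2} uses test functions in $C_c^\infty(\mathbb{R}^2)$, so only \emph{local} information about $u_0$ near the support of $\phi$ will be required; this is what makes the unbounded-domain passage feasible.

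The heart of the argument is to show that $\{u_L\}$ is nondecreasing in $L$. For $L_1<L_2$, the zero-extension of $u_{L_1}$ is, on $\Omega_{L_2}$, a subsolution of \eqref{Lhomeq1}: it solves the equation on $\Omega_{L_1}$, while on $\Omega_{L_2}\setminus\Omega_{L_1}$ it vanishes and there $0\leq\tilde f$ by assumption \ref{Af}. Since both functions carry the Dirichlet datum $0$ on $\partial\Omega_{L_2}$ and $u_{L_1}(0,\cdot)=g\,\chi_{\Omega_{L_1}}\leq g=u_{L_2}(0,\cdot)$ (using $g\geq 0$), the comparison principle of Lemma \ref{L4} — adapted in the obvious way to sub-/supersolutions, which its Kirchhoff-transformation proof accommodates — gives $u_{L_1}\leq u_{L_2}$ on $\Omega_{L_2}$. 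Thus $\{u_L\}$ increases with $L$, and the uniform bound $0\leq u_L\leq M$ from \eqref{L3e9} (whose constant $M$ is independent of $L$) lets me invoke the monotone convergence theorem to produce a limit $u_0$ with $0\leq u_0\leq M$ and, crucially, $u_L\to u_0$ \emph{strongly} in $L^2_{\mathrm{loc}}((0,T)\times\mathbb{R}^2)$.

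To upgrade this to convergence of gradients and time-derivatives, I would derive local energy estimates: testing \eqref{wfbd1} against $\zeta^2 u_L$ with a fixed cutoff $\zeta\in C_c^\infty(\mathbb{R}^2)$ equal to $1$ on $B_R$ yields, via the uniform positivity of $D^*$ (Proposition \ref{L2}) and the $L^\infty$ bound, control of $\|\nabla u_L\|_{L^2((0,T)\times B_R)}$ and $\|\partial_t u_L\|_{L^2(0,T;[H^1(B_R)]^*)}$ uniform in $L$ once $L>R$. Such localization is essential, since the global constants in \eqref{L3e6}–\eqref{L3e7} blow up with $|\Omega_L|$ because $\tilde f$ is spatially constant. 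Along a subsequence I then get $\nabla u_L\rightharpoonup\nabla u_0$ and $\partial_t u_L\rightharpoonup\partial_t u_0$ weakly in the corresponding local spaces. The cell functions enter only through the scalar average $\mathfrak{A}(u_0)$; one must check consistency of this nonlocal quantity across the family and that $\mathfrak{A}(u_L)\to\mathfrak{A}(u_0)$, so that the linear cell problem \eqref{Lcell3}–\eqref{Lcellp} and hence $D^*(u_L,W)\to D^*(u_0,W)$.

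Finally I would pass to the limit in \eqref{wfbd1}. Given $\phi\in C_c^\infty(\mathbb{R}^2)$ with $\mathrm{supp}\,\phi\subset B_R$, the identity holds for every $u_L$ with $L>R$; the $\partial_t$-term and the forcing term pass by the weak convergences above, while for the diffusion term $\int\nabla\phi\cdot D^*(u_L,W)\nabla u_L\,dx$ I combine the \emph{strong} $L^2_{\mathrm{loc}}$ convergence of $u_L$ (hence of the coefficient $D^*(u_L,W)$, through continuity of $P'$) with the \emph{weak} $L^2_{\mathrm{loc}}$ convergence of $\nabla u_L$. This product passage is exactly where the monotone strong convergence is indispensable, and I expect it to be the main obstacle: without strong convergence of $u_L$ one could not identify the limit of the nonlinear coefficient multiplying a merely weakly convergent gradient. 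The initial condition $u_0(0)=g$ is inherited from $u_L(0)=g\,\chi_{\Omega_L}\to g$, which completes the verification that $(u_0,W)$ solves $P(\Omega)$ in the sense of Definition \ref{D2}.
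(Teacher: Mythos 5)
Your proposal follows essentially the same route as the paper: approximate by the bounded-domain problems of Theorem \ref{T1}, use Lemma \ref{L3} and the comparison principle of Lemma \ref{L4} to make the zero-extended solutions monotone in $L$, invoke the monotone convergence theorem together with the uniform $L^\infty$ bound to obtain a strongly convergent limit, and pass to the limit in the weak formulation for compactly supported test functions by pairing the strong convergence of the coefficient $D^*(u_L,W)$ with the weak convergence of $\nabla u_L$. The only substantive deviation is your localization of the energy and time-derivative estimates via cutoffs (where the paper asserts a global gradient bound with an $L$-independent constant), a refinement that is in fact welcome since $\tilde f$ is spatially constant and the global norms grow with $|\Omega_L|$.
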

\begin{proof}
Let $L>0$, Set $\Omega_L$ be a ball centered at origin and having radius $2L$. Assume $(u_L,W)$ be solution of \eqref{Lhomeq1}--\eqref{Lcellp} in the sense of Definition \ref{D1}. We define $\Tilde{u}_L$ as the zero extension of $u_L$ to whole $\mathbb{R}^2$. We show that $\Tilde{u}_L$ converges as $L\rightarrow\infty$ to the solution to  $P(\Omega)$ in the sense of Definition \ref{D2}. 
\par Let $L_2\geq L_1>0$. Take $\Tilde{u}_{L_1}$, $\Tilde{u}_{L_2}$ be the extended solution to  $P(\Omega_{L_1})$ and  $P(\Omega_{L_2})$ respectively. The existence of $\Tilde{u}_{L_1}$ and $\Tilde{u}_{L_2}$ is guaranteed by Theorem \ref{T1}. 
Since ${u}_{L_1}=0$ on $\partial\Omega_{L_1} $ and by Lemma \ref{L3} ${u}_{L_2}\geq 0$ on $\partial\Omega_{L_1} $, so as a result of Lemma \ref{L4} we get $\Tilde{u}_{L_1}\leq\Tilde{u}_{L_2}$ on $\Omega_{L_1}$.  Since $\Tilde{u}_{L_1} $ is the zero extension outside $\Omega_{L_1}$ and $u_{L_2}\geq 0$ by Lemma \ref{L3}, we get 
\begin{equation}\label{T2e1}
    \Tilde{u}_{L_2}\geq \Tilde{u}_{L_1}.
\end{equation}
\par Using the monotone convergence theorem (see Theorem 4 in Appendix E of \cite{evans2010partial}), \eqref{T2e1} and \eqref{L3e9}, we get that the limit 
\begin{align}
    u_\infty :=\lim_{L\rightarrow\infty }\Tilde{u}_{L}
\end{align}
exists in $L^1((0,T)\times \mathbb{R}^2)$. To prove that $\Tilde{u}_{L}$ converges to $u_\infty$ strongly in $L^2((0,T)\times \mathbb{R}^2)$, we use the interpolation inequality. Using the interpolation inequality (see Appendix B of \cite{evans2010partial}) there exists $q\in (2,\infty)$ also $ \alpha\in (0,1)$ such that the following inequality
\begin{equation}\label{T2e3}
    \|\Tilde{u}_{L}-u_\infty\|_{L^2((0,T)\times \mathbb{R}^2)}\leq\|\Tilde{u}_{L}-u_\infty\|_{L^1((0,T)\times \mathbb{R}^2)} ^{1-\alpha}\|\Tilde{u}_{L}-u_\infty\|_{L^q((0,T)\times \mathbb{R}^2)}
\end{equation}
is satisfied.
The first term on the right-hand side of equation \eqref{T2e3} goes to zero as $L\rightarrow \infty$, while the second term in the same equation is bounded. Hence, we conclude that $\Tilde{u}_{L}$ converges to $u_\infty$ strongly in $L^2((0,T)\times \mathbb{R}^2)$. By considering the identity  \eqref{wfbd1} written for $u_L$ and choosing $\phi=u_L$,  we get with the help of Proposition \ref{L2} that there exist $C>0$ such that
\begin{equation}\label{ube5}
    \|\nabla u_L\|_{L^2((0,T)\times \Omega_L)}\leq C.
\end{equation}
The constant  $C$ arising in \eqref{ube5} is independent on $L$. The uniform bound  \eqref{ube5}, ensures 
\begin{equation}\label{ube6}
    \Tilde{u}_L\rightharpoonup u_\infty \,\,\, \mbox{weakly in}\,\,\,L^2((0,T);H^1 (\mathbb{R}^2))
\end{equation}
as $L\rightarrow\infty$.
\par Now we prove that the pair $(\Tilde{u}_{L^*}, W)$ is a solution in the sense of Definition \ref{D2} to the upscaled model $P(\Omega)$. Let $(\phi,\psi)\in C_c^{\infty}(\mathbb{R}^2)\times H^1_\#(Z)$, then there exists a $L^*>0$ such that  $supp(\phi)\subset \Omega_{L^*}$. Then by Theorem \ref{T1}, there exists $(u_{L^*},W)\in L^2(0,T;H^1(\mathbb{R}^2))\times [H_\#^1(Z)]^2$ satisfying 
 \begin{align*}
    \int_{\mathbb{R}^2} \partial_t \Tilde{u}_{L^*} \phi \, dx+ \int_{\mathbb{R}^2} \nabla \phi\cdot D^*(\Tilde{u}_{L^*},W) \nabla \Tilde{u}_{L^*}  \, dx&=  \int_{\mathbb{R}^2}\left( \frac{1}{|Z|}\int_Z f\, dy + \frac{-1}{|Z|}\int_{\Gamma_N}g_Nd\sigma_y\right)\phi \, dx,\label{}\\
    \int_{Z}\nabla_y \psi \cdot D(y)\nabla_y w_idy+  \mathfrak{A}(\Tilde{u}_{L^*})\int_{Z}B(y) \cdot\nabla_y \psi  w_i\,  dy&=\int_Z \left(B^*\cdot e_i-B(y)\cdot  \mathfrak{A}(\Tilde{u}_{L^*}) e_i\right)\psi \, dy.\label{}
\end{align*}
Using the fact that $\Tilde{u}_{L^*}$ converges to $u_\infty$ strongly in $L^2((0,T)\times \mathbb{R}^2)$ and jointly with \eqref{ube6}, we obtain that
\begin{align}
    \int_{\mathbb{R}^2} \partial_t u_\infty \phi \, dx+ \int_{\mathbb{R}^2}  \nabla \phi\cdot D^*(u_\infty,W) \nabla u_\infty \, dx&=  \int_{\mathbb{R}^2}\left( \frac{1}{|Z|}\int_Z f\, dy + \frac{-1}{|Z|}\int_{\Gamma_N}g_Nd\sigma_y\right)\phi \, dx,\label{ube9}\\
    \int_{Z}\nabla_y \psi \cdot D(y)\nabla_y w_idy+ \mathfrak{A}(u_\infty)\int_{Z}B(y)\cdot w_i\nabla_y \psi\,  dy&=\int_Z \left(B^*\cdot e_i-B(y)\cdot \mathfrak{A}(u_\infty) e_i\right)\psi \, dy.\label{ube10}
\end{align}
\eqref{ube9} and \eqref{ube10} point out that  $(u_{\infty},W)\in L^2(0,T;H^1(\mathbb{R}^2))\times [H_\#^1(Z)]^2$ is a solution  defined  in the sense of Definition \ref{D2} to the upscaled model $P(\Omega)$ .
\end{proof}}

\section{Conclusion and outlook}\label{conclusion}

Using the concept of two-scale asymptotic expansion with drift (cf., e.g., \cite{ALLAIRE20102292}), we derived an upscaled equation and corresponding dispersion effective transport tensors for a reaction-diffusion-large drift problem posed in a two-dimensional domain with obstacles. The structure of the resulting upscaled model is a quasi-linear parabolic equation problem (the macroscopic problem) posed on an unbounded domain coupled with a quasi-linear elliptic equation (the cell problem) posed in a bounded domain. The dispersion components in the upscaled model inherit characteristics of both the diffusion and drift exhibited in the original  (microscopic) problem. 

It is worth noting that, in absence of the drift (i.e., for $B=0$), we are essentially performing the classical two-scale asymptotic homogenization  expansions  and consequently, we end up with a previously known macroscopic reaction-diffusion equation. If, on the other hand,  we consider drifts $B$ depending on both variables $x$ and $\frac{x}{\es}$, then the approach based on  asymptotic expansions with drift becomes ineffective. To cope  at least formally with such peculiar situation, the drift $B$ must be additionally assumed to be periodic in his fast variable. 
  If we substitute the drift \eqref{p} with any polynomial of real variables, then the formal homogenization asymptotics still unveils the same upscaled model structure. Note that  \eqref{p} was employed in the proof of solvability of the upscaled problem together with the  classical Schauder fixed-point theorem, a monotonicity trick taken from  \cite{HILHORST20071118}, as well as a suitable reformulation of the problem via a Kirchhoff-type transformation. The major obstacles in proving the existence result were the unboundedness of the macroscopic domain and  the particular macro-micro coupling in the upscaled model. Replacing \eqref{p} with nonlinear versions requires a reconsideration of the functional framework to treat the solvability of the upscaled model equations.  

\par We discussed a two-dimensional problem simply because its origin is intimately  linked to the hydrodynamic limit  of a suitably-scaled totally asymmetric simple exclusion process (TASEP), see \cite{CIRILLO2016436}.  However, the asymptotic expansions work can easily be extended to a 3D case. Note also that instead of involving rectangular obstacles, any shape having a Lipschitz boundary can be taken into consideration without affecting too much the results. Furthermore, if we consider our original microscopic problem posed in bounded domain and consider a slow one directional nonlinear drift, then we obtain the upscaled model derived in \cite{cirillo2020upscaling}. If instead of the bounded domain we consider an infinite strip along the direction of the drift, then a large drift two-scale homogenization is likely to be successful again in recovering a similar result as in this work.  


\par We did not exploit fully the combination of the explosion in the drift and the particular choice of its nonlinear structure. To do so, we would need first to justify rigorously the performed asymptotic expansions, by using e.g. the two-scale convergence with drift as in \cite{hutridurga} and then attempt to prove some sort of corrector estimates, perhaps taking inspiration from \cite{ouaki2012multiscale} and \cite{ouaki2015priori}. 
Progress at this level would facilitate investigations of the same problem now posed for thin layers. This would offer the possibility to combine ``large-drift homogenization" with dimension reduction arguments, which we believe has enormous potential to bring in fundamental understanding what concerns the design of thin composite materials able to endure high-velocity particle impact. 

\section*{Acknowledgments}
The work of V.R. and A.M. is partially supported by the Swedish Research Council's project ``{\em  Homogenization and dimension reduction of thin heterogeneous layers}" (grant nr. VR 2018-03648).

\section*{Appendix}\label{apppendix}
We derive here a particular $L^\infty$-bound on the weak solution of a reaction-diffusion equation similar to our problem. This is an auxiliary ingredient used in the proof of Theorem \ref{T1}.
\begin{proposition}\label{AT1}
Let $T>0$ and $\emptyset\ne \Omega_R\subset\mathbb{R}^2, Z\subset \Omega_R$ with  both $\partial Z$ and $\partial \Omega_R$ having Lipschitz boundaries. \\ Take 
$v\in L^2(0,T;H^1(\Omega_R)\cap L^\infty((0,T)\times \Omega_R)$ with the uniform bound
\begin{equation}\label{A1e1}
   \| v\|_{L^{\infty}((0,T)\times\Omega_R)}\leq \|g\|_{L^{\infty}(\Omega_R) }+T\left(\|f\|_{L^{\infty}(0,T;L^2_{\#}(Z)) }+\|g_N\|_{L^{\infty}(0,T;L^2_{\#}(\Gamma_N)) }\right),
\end{equation}
where $f,g_N $ and $g$ are given functions such that $f\in L^{\infty}(0,T;L^2_{\#}(Z)), g_N\in L^{\infty}(0,T;L^2_{\#}(\Gamma_N)) $ and $g\in L^\infty (\Omega_R)$.  If $u\in L^2(0,T;H^1(\Omega_R))$ is a weak solution to the problem 
  \begin{align}
    \partial_t u +\mathrm{div}( -D^*(v)\nabla_x  u)&=\int_Zf\, dx+\int_{\Gamma_N}g_N\, d\sigma_y  &\mbox{on} \hspace{.4cm}  (0,T)\times \Omega_R, \label{A1e2}\\
    ( -D^*(v)\nabla_x  u)\cdot n&=0&\mbox{on} \hspace{.4cm}  (0,T)\times \Omega_R\\
    u(0,x)&=g(x) &\mbox{for} \hspace{.4cm}  x\in \Omega_R, \label{A1e4}
\end{align}
where $D^*(v)$ is  a  positive definite dispersion tensor  independent of $u$ and linearly dependent on $v$, then $u\in L^\infty((0,T)\times \Omega_R)$ satisfies the  uniform bound 
\begin{equation*}\label{}
   \| u\|_{L^{\infty}((0,T)\times\Omega_R)}\leq \|g\|_{L^{\infty}(\Omega_R) }+T\left(\|f\|_{L^{\infty}(0,T;L^2_{\#}(Z)) }+\|g_N\|_{L^{\infty}(0,T;L^2_{\#}(\Gamma_N)) }\right).
\end{equation*}
\end{proposition}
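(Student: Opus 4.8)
The plan is to prove the bound by a comparison argument that exploits the two special features of problem \eqref{A1e2}--\eqref{A1e4}: the source term $\int_Z f\, dy+\int_{\Gamma_N}g_N\, d\sigma_y$ is independent of the macroscopic variable $x$, and the flux boundary condition on $\partial\Omega_R$ is homogeneous. Because of this structure, a \emph{spatially constant} supersolution is available, and it reproduces exactly the right-hand side claimed in the statement. Concretely, I would introduce the barrier
\[
\overline{u}(t):=\|g\|_{L^\infty(\Omega_R)}+\int_0^t\left(\left|\int_Z f(s,y)\, dy\right|+\left|\int_{\Gamma_N}g_N(s,y)\, d\sigma_y\right|\right)ds,
\]
which does not depend on $x$. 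Then $\nabla_x\overline{u}=0$, so $\overline{u}$ trivially satisfies the no-flux condition on $\partial\Omega_R$; moreover $\overline{u}(0)=\|g\|_{L^\infty(\Omega_R)}\geq g$ a.e., and by construction $\partial_t\overline{u}\geq\int_Z f\, dy+\int_{\Gamma_N}g_N\, d\sigma_y$ pointwise in $t$ (since $|A|+|B|\geq A+B$). Hence $\overline{u}$ is a weak supersolution of \eqref{A1e2}--\eqref{A1e4}.

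Next I would compare $u$ with $\overline{u}$ at the level of the weak formulation. Subtracting the weak equation for $u$ from the supersolution inequality for $\overline{u}$ and testing the difference with $(u-\overline{u})^+\in L^2(0,T;H^1(\Omega_R))$ (an admissible choice because $u\in L^2(0,T;H^1)$ and $\overline{u}$ is constant in $x$), the diffusion term yields $\int_{\Omega_R}\nabla(u-\overline{u})^+\cdot D^*(v)\nabla(u-\overline{u})^+\, dx\geq 0$ by the positive definiteness of $D^*$ assumed in the proposition (and quantified in Proposition \ref{L2}), while the source contribution has the favourable sign. Using the chain rule for the time derivative, one obtains $\tfrac{d}{dt}\|(u-\overline{u})^+\|_{L^2(\Omega_R)}^2\leq 0$. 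Since $(u-\overline{u})^+(0)=(g-\|g\|_{L^\infty})^+=0$, integrating in time forces $(u-\overline{u})^+\equiv 0$, i.e. $u\leq\overline{u}$ a.e. Repeating the argument with the symmetric lower barrier $\underline{u}:=-\overline{u}$ (a subsolution, since $-|A|-|B|\leq A+B$) gives $u\geq\underline{u}$, whence $|u|\leq\overline{u}$. Finally, estimating the two integrals in $\overline{u}(T)$ by the Cauchy--Schwarz inequality (using that $|Z|$ and $|\Gamma_N|$ are finite) bounds $\overline{u}(T)$ by $\|g\|_{L^\infty(\Omega_R)}+T\big(\|f\|_{L^\infty(0,T;L^2_\#(Z))}+\|g_N\|_{L^\infty(0,T;L^2_\#(\Gamma_N))}\big)$, which is precisely the asserted $L^\infty$-bound.

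The step I expect to be the main obstacle is the rigorous justification of the comparison at the weak level. The admissibility of the truncated test function $(u-\overline{u})^+$ together with the chain-rule identity for $\tfrac{d}{dt}\|(u-\overline{u})^+\|_{L^2}^2$ requires that $\partial_t u$ belong to $L^2(0,T;(H^1(\Omega_R))^*)$ and a density/approximation argument (Steklov averaging or the standard Lions--Magenes lemma) to make sense of the duality pairing $\langle\partial_t u,(u-\overline{u})^+\rangle$ and to transfer it to a time derivative of the $L^2$-norm. By contrast, the fact that $D^*(v)$ need only be positive definite (not symmetric) is harmless, since only the quadratic form $\xi^t D^*(v)\xi$ enters the estimate; the linear dependence on $v$ and the $L^\infty$-bound on $v$ are used solely to guarantee that $D^*(v)$ is a bounded measurable coefficient, so that every integral above is well defined.
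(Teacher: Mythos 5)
Your proof is correct, but it takes a different route from the paper's. The paper exploits the linearity of the equation in $u$ (possible because $D^*(v)$ does not depend on $u$) to split $u=h_1+h_2$, where $h_1$ carries the initial datum $g$ and solves the homogeneous equation, and $h_2$ carries the $x$-independent source with zero initial datum; the bound $h_1\leq\|g\|_{L^\infty}$ is obtained by testing with $(h_1-M)^+$, while $h_2$ is controlled via Duhamel's principle, writing $h_2(t,x)=\int_0^t h_3(\tau,x)\,d\tau$ with $h_3$ solving the homogeneous problem with initial datum $\int_Z f\,dy+\int_{\Gamma_N}g_N\,d\sigma_y$, and then invoking an $L^\infty$ bound for $h_3$. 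You instead build a single spatially constant barrier $\overline{u}(t)$ that is simultaneously a supersolution (and $-\overline{u}$ a subsolution) and run one weak comparison argument with the truncation $(u-\overline{u})^+$. The two arguments use the same structural facts — the source is independent of $x$, the flux condition on $\partial\Omega_R$ is homogeneous, and only the quadratic form of $D^*(v)$ enters — but yours is more economical: it avoids Duhamel entirely and replaces the two separate maximum-principle steps (for $h_1$ and for $h_3$, the latter of which the paper only sketches) by one truncation estimate. The price is that you must justify the chain rule $\langle\partial_t(u-\overline{u}),(u-\overline{u})^+\rangle=\tfrac12\tfrac{d}{dt}\|(u-\overline{u})^+\|_{L^2}^2$, which, as you note, requires $\partial_t u\in L^2(0,T;(H^1(\Omega_R))^*)$ and a Steklov-averaging or Lions--Magenes argument; the paper's route needs the analogous regularity implicitly when testing with $(h_1-M)^+$, so neither approach escapes this point. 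One shared cosmetic gap: both your final Cauchy--Schwarz step and the paper's corresponding estimate silently use $|Z|^{1/2}\leq 1$ and $|\Gamma_N|^{1/2}\leq 1$; strictly speaking a factor $\max\{|Z|,|\Gamma_N|\}^{1/2}$ should appear, but this does not affect the structure of either argument.
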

\begin{proof}
Proceeding similarly as in Lemma 10 from \cite{EDEN2022103408}, we let $h_1\in L^2(0,T;H^1(\Omega_R))$ be the weak solution of 
\begin{align}
    \partial_t h_1 +\mathrm{div}( -D^*(v)\nabla_x  h_1)&=0  &\mbox{on} \hspace{.4cm}  (0,T)\times \Omega_R, \label{A1e6}\\
    ( -D^*(v)\nabla_x  h_1)\cdot n&=0&\mbox{on} \hspace{.4cm}  (0,T)\times \Omega_R\\
    h_1(0,x)&=g(x) &\mbox{for} \hspace{.4cm}  x\in \overline{\Omega}_R, \label{A1e7}
\end{align}
where we define the weak formulation of the problem \eqref{A1e6}--\eqref{A1e7} as: Find $h_1\in L^2(0,T;H^1(\Omega_R))$ satisfying 
\begin{equation}\label{A1e8}
   \int_{\Omega_R} \partial_t h_1\phi\, dx+ \int_{\Omega_R}\nabla \phi\cdot  D^*(v)\nabla_x  h_1\, dx=0
\end{equation}
and 
\begin{equation}\label{A1e9}
   h_1(t=0)=g,
\end{equation}
for all $\phi\in H^1(\Omega_R)$.
The existence of weak solutions to problem \eqref{A1e8}--\eqref{A1e9} follows from the standard theory of parabolic PDE (for details, see for instance \cite{evans2010partial}). Let $M:=\|g\|_{L^\infty(\Omega_R)}$. Then testing suitably
we are led to
\begin{equation}\label{A1e10}
   \int_{\Omega_R} \partial_t (h_1-M)^+\phi\, dx+ \int_{\Omega_R}\nabla \phi\cdot  D^*(v)\nabla_x  (h_1-M)^+\, dx=0,
\end{equation}
with
\begin{equation}\label{A1e11}
   (h_1-M)^+(0,x)=0 \mbox{ for a.e. }  x\in \Omega_R.
\end{equation}
From \eqref{A1e10}--\eqref{A1e11}, we conclude that
\begin{equation}\label{A1e12}
    h_1\leq M \mbox{ a.e. in } (0,T)\times \Omega_R.
\end{equation}
Let $h_2\in L^2(0,T;H^1(\Omega_R))$ be the weak solution to
\begin{align}
    \partial_t h_2 +\mathrm{div}( -D^*(v)\nabla_x  h_2)&=\int_Zf\, dx+\int_{\Gamma_N}g_N\, d\sigma_y  &\mbox{on} \hspace{.4cm}  (0,T)\times \Omega_R, \label{A1e13}\\
    ( -D^*(v)\nabla_x  h_2)\cdot n&=0&\mbox{on} \hspace{.4cm}  (0,T)\times \Omega_R\\
    h_2(0,x)&=0 &\mbox{for} \hspace{.4cm}  x\in \overline{\Omega}_R, \label{A1e15}
\end{align}
Using Duhamel's principle (see e.g Chapter 5 of \cite{Precup+2012}), we can write
\begin{equation}\label{A1ex1}
    h_2(t,x)=\int_0^th_3(\tau,x)d\tau,
\end{equation} where $h_3$ is the solution of 
\begin{align*}
    \partial_t h_3 +\mathrm{div}( -D^*(v)\nabla_x  h_3)&= 0 &\mbox{on} \hspace{.4cm}  (0,T)\times \Omega_R, \label{}\\
    ( -D^*(v)\nabla_x  h_3)\cdot n&=0&\mbox{on} \hspace{.4cm}  (0,T)\times \Omega_R,\\
    h_3(0,x)&=\int_Zf\, dx+\int_{\Gamma_N}g_N\, d\sigma_y &\mbox{for} \hspace{.4cm}  x\in \overline{\Omega}_R. \label{}
\end{align*}
Since  $f\in L^{\infty}(0,T;L^2_{\#}(Z)), g_N\in L^{\infty}(0,T;L^2_{\#}(\Gamma_N)) $, we get
\begin{equation*}\label{}
    \left|\int_Zf\, dx+\int_{\Gamma_N}g_N\, d\sigma_y\right|_{L^\infty((0,T)\times\Omega_L)}\leq \left(\|f\|_{L^{\infty}(0,T;L^2_{\#}(Z)) }+\|g_N\|_{L^{\infty}(0,T;L^2_{\#}(\Gamma_N)) }\right).
\end{equation*}
This gives
\begin{equation}\label{A1e20}
    |h_3|\leq \left(\|f\|_{L^{\infty}(0,T;L^2_{\#}(Z)) }+\|g_N\|_{L^{\infty}(0,T;L^2_{\#}(\Gamma_N)) }\right).
\end{equation}
Inserting \eqref{A1e20} in \eqref{A1ex1}, we obtain
\begin{equation*}\label{}
    |h_2|\leq T\left(\|f\|_{L^{\infty}(0,T;L^2_{\#}(Z)) }+\|g_N\|_{L^{\infty}(0,T;L^2_{\#}(\Gamma_N)) }\right).
\end{equation*}
Now, using the linearity of the diffusion coefficient $D^*(v)$, it yields $u=h_1+h_2$.  Hence, we have 
\begin{equation*}\label{}
    |u|\leq \|g\|_{L^{\infty}(\Omega_R) }+T\left(\|f\|_{L^{\infty}(0,T;L^2_{\#}(Z)) }+\|g_N\|_{L^{\infty}(0,T;L^2_{\#}(\Gamma_N)) }\right)
\end{equation*}
\end{proof}

 \bibliographystyle{amsplain}
	\bibliography{mybib}

\providecommand{\bysame}{\leavevmode\hbox to3em{\hrulefill}\thinspace}
\providecommand{\MR}{\relax\ifhmode\unskip\space\fi MR }
\providecommand{\MRhref}[2]{%
  \href{http://www.ams.org/mathscinet-getitem?mr=#1}{#2}
}
\providecommand{\href}[2]{#2}
\begin{thebibliography}{10}

\bibitem{allaireshape2002}
G.~Allaire, \emph{Shape {O}ptimization by the {H}omogenization {M}ethod},
  Springer-Verlag New York, 2002.

\bibitem{ALLAIRE20102292}
G.~Allaire, R.~Brizzi, A.~Mikelić, and A.~Piatnitski, \emph{Two-scale
  expansion with drift approach to the {T}aylor dispersion for reactive
  transport through porous media}, Chemical Engineering Science \textbf{65}
  (2010), no.~7, 2292--2300.

\bibitem{ouaki2012multiscale}
G.~Allaire, S~Desroziers, G.~Ench{\'e}ry, and F.~Ouaki, \emph{A multiscale
  finite element method for transport modelling}, CD-ROM Proceedings of the 6th
  European Congress on Computational Methods in Applied Sciences and
  Engineering, Vienna University of Technology, Austria, 2012.

\bibitem{hutridurga}
G.~Allaire and H.~Hutridurga, \emph{{Homogenization of reactive flows in porous
  media and competition between bulk and surface diffusion}}, IMA Journal of
  Applied Mathematics \textbf{77} (2012), no.~6, 788--815.

\bibitem{allaire2016}
G.~Allaire and H.~Hutridurga, \emph{Upscaling nonlinear adsorption in periodic
  porous media – homogenization approach}, Applicable Analysis \textbf{95}
  (2016), no.~10, 2126--2161.

\bibitem{allaire2010homogenization}
G.~Allaire, A.~Mikeli{\'c}, and A.~Piatnitski, \emph{Homogenization approach to
  the dispersion theory for reactive transport through porous media}, SIAM
  Journal on Mathematical Analysis \textbf{42} (2010), no.~1, 125--144.

\bibitem{allaire_orive_2007}
G.~Allaire and R.~Orive, \emph{Homogenization of periodic non self-adjoint
  problems with large drift and potential}, ESAIM: Control, Optimisation and
  Calculus of Variations \textbf{13} (2007), no.~4, 735–749.

\bibitem{ALLAIRE2012300}
G.~Allaire, I.~Pankratova, and A.~Piatnitski, \emph{Homogenization and
  concentration for a diffusion equation with large convection in a bounded
  domain}, Journal of Functional Analysis \textbf{262} (2012), no.~1, 300--330.

\bibitem{aubin1963analyse}
J.-P. Aubin, \emph{Un théorème de compacité}, Comptes Rendus de l'Académie
  des Sciences, Paris \textbf{256} (1963), no.~24, 5042--5044.

\bibitem{bagnall2013application}
K.~R. Bagnall, Y.~S. Muzychka, and E.~N. Wang, \emph{Application of the
  {K}irchhoff transform to thermal spreading problems with convection boundary
  conditions}, IEEE Transactions on Components, Packaging and Manufacturing
  Technology \textbf{4} (2013), no.~3, 408--420.

\bibitem{bear1988dynamics}
J.~Bear, \emph{Dynamics of {F}luids in {P}orous {M}edia}, Dover Publications,
  1988.

\bibitem{donato1999homogenization}
D.~Cior{\u a}nescu and P.~Donato, \emph{An {I}ntroduction to {H}omogenization},
  Oxford University Press, 1999.

\bibitem{cirillo2020upscaling}
E.~N.~M. Cirillo, I.~de~Bonis, A.~Muntean, and O.~Richardson, \emph{Upscaling
  the interplay between diffusion and polynomial drifts through a composite
  thin strip with periodic microstructure}, Meccanica \textbf{55} (2020),
  2159--2178.

\bibitem{CIRILLO2016436}
E.~N.~M. Cirillo, O.~Krehel, A.~Muntean, R.~van Santen, and A.~Sengar,
  \emph{Residence time estimates for asymmetric simple exclusion dynamics on
  strips}, Physica A: Statistical Mechanics and its Applications \textbf{442}
  (2016), 436--457.

\bibitem{EDEN2022103408}
M.~Eden, C.~Nikolopoulos, and A.~Muntean, \emph{A multiscale quasilinear system
  for colloids deposition in porous media: Weak solvability and numerical
  simulation of a near-clogging scenario}, Nonlinear Analysis: Real World
  Applications \textbf{63} (2022), 103408.

\bibitem{evans2010partial}
L.~C. Evans, \emph{Partial {D}ifferential {E}quations}, vol.~19, American
  Mathematical Society, 2010.

\bibitem{henning2010heterogeneous}
P.~Henning and M.~Ohlberger, \emph{The heterogeneous multiscale finite element
  method for advection-diffusion problems with rapidly oscillating coefficients
  and large expected drift}, Networks \& Heterogeneous Media \textbf{5} (2010),
  no.~4, 711--744.

\bibitem{HILHORST20071118}
D.~Hilhorst, J.R. King, and M.~Röger, \emph{Mathematical analysis of a model
  describing the invasion of bacteria in burn wounds}, Nonlinear Analysis:
  Theory, Methods \& Applications \textbf{66} (2007), no.~5, 1118--1140.

\bibitem{hutridur}
H.~Hutridurga, \emph{Homogenization of {C}omplex {F}lows in {P}orous {M}edia
  and {A}pplications}, Ph.D. thesis, {\'E}cole Polytechnique, Palaiseau,
  France, 2013.

\bibitem{ijioma2019fast}
E.~Ijioma and A.~Muntean, \emph{Fast drift effects in the averaging of a
  filtration combustion system: A periodic homogenization approach}, Quarterly
  of Applied Mathematics \textbf{77} (2019), no.~1, 71--104.

\bibitem{ILIEV2020103779}
O.~Iliev, A.~Mikelić, T.~Prill, and A.~Sherly, \emph{Homogenization approach
  to the upscaling of a reactive flow through particulate filters with wall
  integrated catalyst}, Advances in Water Resources \textbf{146} (2020),
  103779.

\bibitem{IYER2014957}
G.~Iyer, T.Komorowski, A.~Novikov, and L.~Ryzhik, \emph{From homogenization to
  averaging in cellular flows}, Annales de l'Institut Henri Poincar\'e (C) Non
  Linear Analysis \textbf{31} (2014), no.~5, 957--983.

\bibitem{ladyzhenskaia1988linear}
O.~A. Ladyzhenskaia, V.~A. Solonnikov, and N.~N. Ural'tseva, \emph{Linear and
  {Q}uasi-{L}inear {E}quations of {P}arabolic type}, vol.~23, American
  Mathematical Soc., 1988.

\bibitem{vafai2015handbook}
S.~Liu and J.H. Masliyah, \emph{Dispersion in {P}orous {M}edia}, Handbook of
  {P}orous {M}edia(2nd ed.) (K.~Vafai, ed.), CRC Press, 2005, pp.~81--141.

\bibitem{marusik2005}
E.~Maru{\v s}i{\'c}-Paloka and A.~L. Piatnitski, \emph{Homogenization of a
  nonlinear convection-diffusion equation with rapidly oscillating coefficients
  and strong convection}, Journal of the London Mathematical Society
  \textbf{72} (2005), 391--409.

\bibitem{ouaki2015priori}
F.~Ouaki, G.~Allaire, S.~Desroziers, and G.~Ench{\'e}ry, \emph{A priori error
  estimate of a multiscale finite element method for transport modeling}, SeMa
  Journal \textbf{67} (2015), no.~1, 1--37.

\bibitem{irina2011}
I.~Pankratova and A.~Piatnitski, \emph{Homogenization of convection-diffusion
  equation in infinite cylinder}, Networks \& Heterogeneous Media \textbf{6}
  (2011), no.~1, 111--126.

\bibitem{piatnitski2020homogenization}
A.~Piatnitski and M.~Ptashnyk, \emph{Homogenization of biomechanical models of
  plant tissues with randomly distributed cells}, Nonlinearity \textbf{33}
  (2020), no.~10, 5510--5542.

\bibitem{Precup+2012}
R.~Precup, \emph{Linear and {S}emilinear {P}artial {D}ifferential {E}quations:
  {A}n {I}ntroduction}, De Gruyter, 2012.

\bibitem{pyatnitskiui1984averaging}
A.~L. Pyatnitskiĭ, \emph{Averaging a singularly perturbed equation with
  rapidly oscillating coefficients in a layer}, Mathematics of the USSR-Sbornik
  \textbf{49} (1984), no.~1, 19.

\bibitem{raveendran21}
V.~Raveendran, E.N.M. Cirillo, I.~de~Bonis, and A.~Muntean, \emph{Scaling
  effects on the periodic homogenization of a reaction-diffusion-convection
  problem posed in homogeneous domains connected by a thin composite layer},
  Quarterly of Applied Mathematics \textbf{1} (2022), 157--200.

\bibitem{regispredictive}
M.~Regis, \emph{Predictive {A}nalysis for {S}elective {P}hotothermolysis:
  {M}odeling, {S}imulation and {A}nalysis}, Master's thesis, Eindhoven
  University of Technology, 2015, pp.~1--89.

\bibitem{rudin1973functional}
W.~Rudin, \emph{Functional {A}nalysis}, McGraw-Hill, New York, 1973.

\end{thebibliography}

\end{document}